\documentclass[11pt]{amsart}
\usepackage[latin1]{inputenc}
\usepackage{amsmath, amssymb, amsfonts, mathtools, amscd, amsthm, color, latexsym, graphicx, tensor, array, newclude, stmaryrd}
\usepackage[alphabetic, nobysame]{amsrefs}
\usepackage[textwidth=20mm]{todonotes}
\usepackage[top=3cm, bottom=3cm, left=3cm, right=3cm]{geometry}
\usepackage{tikz-cd}
\usepackage[shortlabels]{enumitem}

\usepackage{hyperref}      
\hypersetup{pdfpagelabels,
                 plainpages=false,
                    colorlinks=true,       
                    linkcolor=red,
                    citecolor=blue}

\usepackage[nameinlink]{cleveref}

\usepackage{setspace} 

%%%%%%%%%%%%%%%%%%%%%%%%%%%%%%%%%%%%%%%%%%%%%%%

\theoremstyle{plain}
\newtheorem{thm}{Theorem}[section]

 % "letter-numbered" theorems

\newtheorem{prop}[thm]{Proposition}

\newtheorem{lem}[thm]{Lemma}

\newtheorem{cor}[thm]{Corollary}

\theoremstyle{definition}

\newtheorem{defi}[thm]{Definition}
\newtheorem{rem}[thm]{Remark}

%%%%%%%%%%%%%%%%%%%%%%%%%%%%%%%%%%%%%%%%%%%%%%%%%

\newcommand{\Qbar}{{\overline{\bbQ}}}

\newcommand{\Spec}{{\rm Spec}}

\newcommand{\Gr}{{\textnormal{Gr}}}

\newcommand{\End}{\mathrm{End}\,}
\newcommand{\Hom}{{\rm Hom}}

\newcommand{\Gal}{{\rm Gal}}

\newcommand{\GL}{{\rm GL}}

\newcommand{\Sym}{\textnormal{Sym}}

\newcommand{\Aut}{\textnormal{Aut}}
\newcommand{\an}{\textnormal{an}}

\newcommand{\id}{\mathrm{id}}

\newcommand{\Def}{{\rm Def}}

%%%%%%%%%%%%%%%%%%%%%%%%%%%%%%%%%%%%%%

\newcommand{\Z}{{\mathbb Z}}
\newcommand{\Q}{{\mathbb Q}}
\newcommand{\R}{{\mathbb R}}
\newcommand{\C}{{\mathbb C}}

%%%%%%%%%%%%%%%%%%%%%%%%%%%%%%%%%%%%%%

\newcommand{\im}{\mathrm{Im}}
\newcommand{\reg}{\mathrm{reg}}
\newcommand{\IH}{\mathit{IH}}

\newcommand{\Simp}{\mathrm{Simp}}
\newcommand{\ob}{\mathrm{ob}}
\newcommand{\et}{\mathrm{\acute{e}t}}
\newcommand{\iso}{\mathrm{Iso}}
\newcommand{\B}{\mathrm{B}}
\newcommand{\dR}{\mathrm{dR}}
\newcommand{\Dol}{\mathrm{Dol}}

%%%%%%%%%%%%%% ARROWS %%%%%%%%%%%%%%%%%%%

%%%%%%%%%%%%%%%%%% FONT STYLES %%%%%%%%%%%%%

\newcommand{\bD}{{\mathbf D}}

\newcommand{\bM}{{\mathbf M}}

\newcommand{\bR}{{\mathbf R}}
\newcommand{\bS}{{\mathbf S}}

\newcommand{\bU}{{\mathbf U}}

\newcommand{\fm}{\mathfrak{m}}

\newcommand{\cA}{{\mathcal A}}
\newcommand{\cB}{{\mathcal B}}
\newcommand{\cC}{{\mathcal C}}
\newcommand{\cD}{{\mathcal D}}
\newcommand{\cE}{{\mathcal E}}
\newcommand{\cF}{{\mathcal F}}
\newcommand{\cG}{{\mathcal G}}
\newcommand{\cH}{{\mathcal H}}

\newcommand{\cL}{{\mathcal L}}
\newcommand{\cM}{{\mathcal M}}

\newcommand{\cO}{{\mathcal O}}

\newcommand{\cQ}{{\mathcal Q}}
\newcommand{\cR}{{\mathcal R}}

\newcommand{\cV}{{\mathcal V}}
\newcommand{\cW}{{\mathcal W}}

\newcommand{\bbP}{{\mathbb P}}
\newcommand{\bbQ}{{\mathbb Q}}

%%%%%%%%%%%%%%%%%%%%%%%%%%%%%%%%%%%%%%%%%%%%%%%

\begin{document}
\title{Cohomology jump loci and absolute sets for singular varieties}
\author{Leonardo A. Lerer}
\address{Weizmann Institute of Science, Rehovot 7610001, Israel} 
\email{a.leonardo.lerer@gmail.com}
\begin{abstract}
We extend the notion of absolute subsets of Betti moduli spaces of smooth algebraic varieties to the case of normal varieties. As a consequence we prove that twisted cohomology jump loci in rank one over a normal variety are a finite union of translated subtori. We show that the same holds for jump loci twisted by a unitary local system in the case where the underlying variety $X$ is projective with $H^1(X,\Q)$ pure of weight one. Lastly, we study the interaction of these loci with Hodge theoretic data naturally associated to the representation variety of fundamental groups of smooth projective varieties.
\end{abstract}
\maketitle

%%%%%%%%%%%%% INTRODUCTION

\section{Introduction}

Let $X$ be a complex algebraic variety and $n\in\Z_{\ge 1}$ . The set of semisimple local systems of rank $n$ on $X$ is parametrized by the $\C$-points of a scheme $\bM_{\B}(X,n)$, called the Betti moduli space. The topological properties of the space $X$ are reflected in the structure of $\bM_{\B}(X,n)$ and of naturally defined subsets therein. 
An example of the latter are the {\em cohomology jump loci}, namely the sets
$$\prescript{}{n}\Sigma^i_k(\cF) := \{ L\in \bM_{\B}(X,n)(\C) : \dim_\C H^i(X,L\otimes \cF)\ge k\},$$
for $i,k\in\Z_{\ge 0}$ and $\cF$ is an object of $D^b_c(X,\C)$, the bounded derived category of constructible sheaves on $X$. When $\cF=\C_X$, the constant sheaf of rank one, we simply write $\prescript{}{n}\Sigma^i_k:=\prescript{}{n}\Sigma^i_k(\C_X)$ and, when $n=1$, we write $\Sigma^i_k(\cF):=\prescript{}{1}\Sigma^i_k(\cF)$. These subsets are closed algebraic subvarieties of $\bM_{\B}(X,n)$ and they have been the subject of numerous investigations, starting with Green-Lazarsfeld, Beauville and Catanese (see  \cite{GL87}, \cite{B88}, \cite{C91}).

When $n=1$ the Betti moduli space is a finite union of complex algebraic tori and if $X$ is smooth the cohomology jump loci have a simple form, namely they are translated subtori of $\bM_{\B}(X,1)$. When $\cF\in D^b_c(X,\Qbar)$ is of geometric origin or, more generally, if it defines an absolute $\Qbar$-point (cf. Definition \ref{defi_abs_constr}) the translation is by torsion points. In this generality, the result was proved in \cite[Theorem 1.4.1]{BW20}, building on the previous work of many authors, see the chronology in \cite[Remark 6.13]{BW17b}. 

The main focus of this paper is the study of the structure of such subvarieties in the case where $X$ is allowed to be singular. This has been the topic of some recent works, in which the torsion-translated-subtori property was extended to some classes of singular varieties. In \cite[Theorem 1.1]{ADH} the property is proved for $\Sigma^1_k$ when $X$ is normal. If $X$ is an algebraic variety whose mixed Hodge structure $H^1(X,\Q)$ satisfies $W_0H^1(X,\Q)= 0$, it is proved in \cite[Theorem 1.1]{BR19} that the irreducible components of $\Sigma^i_k$ passing through the identity are subtori. The most general result is found in \cite[Theorem 1.6]{EK} where the torsion-translated-subtori property is proved for $X$ satisfying the same weight condition on $H^1$ and for $\cF\in D^b_c(X,\Qbar)$ satisfying a certain arithmeticity condition, which includes objects coming from geometry (see the discussion at the end of Section 2.1 for a comparison between the notion of arithmeticity and the notion of absolute $\Qbar$-point). Note that a normal variety satisfies $W_0H^1(X,\Q)=0$, since the pullback $H^1(X,\Q)\to H^1(X^{\reg},\Q)$ is injective, where $X^{\reg}$ denotes the smooth locus (see the related discussion in Section 2.3). We extend the results of \cite{EK} in the case of normal varieties and deal with arbitrary twisting objects $\cF\in D^b_c(X,\C)$. More precisely, we prove:

\begin{thm}[\ref{cor_coh_jump_loci}]
Let $X$ be a complex quasi-projective normal variety and $\cF\in D^b_c(X,\C)$. Then, for all $i,k\ge 0$, $\Sigma^i_k(\cF)$ is a finite union of translated subtori. If $\cF\in D^b_c(X,\Qbar)$ is of geometric origin, the translation is by torsion points. 
\end{thm}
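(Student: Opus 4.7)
The plan is to reduce to the smooth case via a simplicial resolution of $X$, apply the known structure theorem for translated subtori on smooth varieties (\cite[Theorem 1.4.1]{BW20}), and track the absoluteness condition through the reduction to obtain torsion translates in the absolute case. Normality of $X$ enters only through the existence of a nice simplicial resolution and the weight bound $W_0H^1(X,\Q)=0$, which follows from the injectivity of $H^1(X,\Q)\to H^1(X^{\reg},\Q)$ recalled in the excerpt.

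Concretely, I would fix a smooth proper hypercovering $\pi_\bullet\colon X_\bullet \to X$, built from a resolution of singularities by iterated fiber products and further resolutions. The pullback $\pi_0^*\colon \bM_B(X,1)\to \bM_B(X_0,1)$ is a closed $\Q$-algebraic embedding of tori, cut out as the equalizer of the two projections $X_1\rightrightarrows X_0$. The hypercohomology spectral sequence
$$E_1^{p,q} = H^q(X_p, \pi_p^*(L\otimes \cF)) \Longrightarrow H^{p+q}(X, L\otimes \cF)$$
realizes $\Sigma^i_k(\cF)$ as a constructible combination --- intersections, unions, and rank loci of differentials --- of the jump loci $\Sigma^q_\ell(\pi_p^*\cF) \subseteq \bM_B(X_p,1)$ on the smooth pieces $X_p$, pulled back to $\bM_B(X,1)$. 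By the smooth case, each of these smooth jump loci is a finite union of translated subtori. Since preimages under algebraic group homomorphisms of tori, finite intersections, and rank loci of families of linear maps between such unions all preserve the class of finite unions of translated subtori, one concludes the first half of the theorem.

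For the torsion refinement, I would extend the notion of absolute subset from \cite{BW20} to $\bM_B(X,1)$ by declaring a subset absolute when it is the preimage under $\pi_0^*$ of an absolute subset of $\bM_B(X_0,1)$ in the smooth sense of Definition \ref{absolutedefi}, with independence of the hypercovering verified via a common-refinement argument together with the naturality of the Galois action on characters. When $\cF$ defines an absolute $\Qbar$-point, each pullback $\pi_p^*\cF$ does as well on the smooth $X_p$, so by \cite[Theorem 1.4.1]{BW20} the smooth jump loci entering the above description are absolute. The structure theorem for absolute subsets of character tori --- whose weight hypothesis $W_0H^1(X,\Q)=0$ is automatic for normal $X$ --- then delivers the torsion-translated-subtori conclusion, provided that the rank loci of the spectral-sequence differentials preserve absoluteness. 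I expect this last compatibility --- that the differentials in the hypercovering spectral sequence are Galois-equivariant in the appropriate sense --- to be the main technical obstacle, since the differentials are constructed via Deligne's mixed Hodge machine and their Galois-equivariance is not a priori manifest at the level of character tori.
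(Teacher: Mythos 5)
Your plan is structurally different from the paper's and, as written, has a genuine gap that the paper is carefully designed to avoid.

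The central problem lies in the spectral-sequence step. You write $\Sigma^i_k(\cF)$ as a ``constructible combination --- intersections, unions, and rank loci of differentials --- of the jump loci $\Sigma^q_\ell(\pi_p^*\cF)$'' and then assert that rank loci of families of linear maps preserve the class of finite unions of translated subtori. That assertion is false in general: the abutment $\dim H^{p+q}(X, L\otimes\cF)$ depends on the ranks of the differentials $d_r$, and the locus where a differential has a prescribed rank is merely a determinantal variety in the torus. There is no a priori reason for it to lie in the Boolean algebra of translated subtori, and even less reason for it to be absolute $K$-constructible --- a difficulty you flag yourself for the torsion refinement, but it already undermines the unrefined claim. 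The entire point of the formalism of absolute sets in \cite{BW20} is to encode ``compatibility with the de Rham comparison and Galois conjugation'' at the level of \emph{functors on derived categories}, precisely so one never has to chase a spectral sequence and verify equivariance of its differentials by hand.

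The paper's route (Theorem \ref{thm1} and Corollary \ref{cor}) avoids descent altogether. It uses a \emph{single} resolution $p\colon\widetilde{X}\to X$ and two consequences of normality that your proposal does not invoke: (i) $p$ has connected fibers, so $p_*p^*L=L$ for any local system $L$ on $X$, making $p_*$ a left retract of $p^*$ at the level of $LocSys$; and (ii) $\pi_1(\widetilde{X})\twoheadrightarrow\pi_1(X)$, so $i\colon\bM_B(X,n)\hookrightarrow\bM_B(\widetilde{X},n)$ is a closed embedding whose image is shown to be absolute $\Q$-constructible (Proposition \ref{lem1}). One then factors $s_*\colon LocSys(X)\to D^b_c(Y,\C)$ through $p^*$ followed by a functor $\alpha$ on $D^b_c(\widetilde{X},\C)$ built from $p_*$, $s_*$, $\cH^0$. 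Because $\alpha$ is a smooth composition of functors in $(\star)_K$ on the \emph{smooth} variety $\widetilde{X}$, Theorem \ref{mainres} applies and gives absoluteness of the jump locus inside $\bM_B(\widetilde{X},n)$; intersecting with the absolute set $i(\bM_B(X,n))$ finishes the argument. The classification of rank-one absolute $\C$- resp.\ $\Qbar$-constructible sets as (torsion-)translated subtori is then invoked once, at the end. Note also that the weight condition $W_0H^1(X,\Q)=0$ plays no role in this proof --- that hypothesis belongs to the Esnault--Kerz and Budur--Rubi\'o results, not to the absolute-set classification of \cite{BW20}, so your appeal to it is misplaced here.

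Your proposal could likely be repaired, but only by abandoning the direct reduction to smooth jump loci and instead showing that the full functor $L\mapsto R\Gamma(X,L\otimes\cF)$, after pre-composing with $p^*$ and using $p_*p^*=\mathrm{id}$, is a smooth composition of functors in $(\star)_K$ on a resolution --- which is exactly what Theorem \ref{thm1} does. As it stands, the ``main technical obstacle'' you identify is a real one, and it is precisely what the paper's factorization is constructed to sidestep.
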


The results of \cite{BR19} and \cite{EK} mentioned above suggest that the most general varieties for which we can expect the torsion-translated-subtori property for twisted cohomology jump loci are those varieties $X$ satisfying $W_0H^1(X,\Q)=0$ (we also refer the reader to the article \cite{Saito} by M. Saito for a study on the condition $W_0H^1=0$). In this direction, we prove that if $X$ is a {\em projective} variety satisfying the above weight condition, i.e. whose $H^1$ is pure of weight one, and $E$ is a unitary local system on $X$, then $\Sigma^i_k(E)$ is a finite union of translated subtori. In particular, we do not require $E$ to possess any arithmeticity property. We remark, however, that we do not recover the fact that the translation is by torsion points when $E$ comes from geometry, as proved in \cite{EK}. Our precise statement is as follows:

\begin{thm}[\ref{thmmulti}]
Let $X$ be a complex projective variety such that $W_0H^1(X,\Q)=0$. Let $E$ be a semisimple local system on $X$ such that for one (equivalently, any) resolution of singularities $p:\widetilde{X}\to X$, $p^*E$ is semisimple. Then, for all $i,k\ge 0$, the varieties $\Sigma^i_k(E)$ are a finite union of translated subtori.
\end{thm}

We then study the interaction of cohomology jump loci with Hodge theoretic data which is naturally attached to the Betti moduli space or representation varieties. In rank one, the Zariski tangent space at the identity of $\bM_{\B}(X,1)$ is canonically identified with $H^1(X,\Q)$ and in \cite{EK} it is proved that the tangent space of irreducible components of cohomology jump loci, translated back to the identity, is a sub-MHS of $H^1(X,\Q)$. A subtorus of $\bM_{\B}(X,1)$ whose tangent space is a sub-MHS is called ``motivic" in {\em loc. cit.}. 
Denote by $\bM_{\B}(X,1)^0$ the connected component of the Betti moduli space containing the constant local system.
We prove the following:

\begin{thm}[\ref{cor_motiv_rank_one}]
Let $X$ be a complex quasi-projective normal variety and $\cF\in D^b_c(X,\C)$. The subtori corresponding to the irreducible components of $\Sigma^i_k(\cF)\cap \bM_{\B}(X,1)^0(\C)$ are motivic.
\end{thm}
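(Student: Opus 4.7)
The plan is to reduce the problem to the case of a smooth variety, where the motivic property is already established in \cite{EK}, via a smooth simplicial resolution together with the injectivity of the pullback on $H^1$ coming from normality. By Theorem~\ref{cor}, the irreducible components of $\Sigma^i_k(\cF)$ are translated subtori of the form $V = tT$; the statement thus reduces to showing that for each such component the tangent space $\Lie(T) \subset H^1(X,\C)$ at the identity is the complexification of a sub-MHS of $H^1(X,\Q)$.

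First I would take a smooth proper hypercovering $a_\bullet : X_\bullet \to X$ with $X_0 = \widetilde X$ a resolution of singularities of $X$. By Deligne's cohomological descent, $\cF \simeq R a_{\bullet *} a_\bullet^* \cF$ in $D^b_c(X,\C)$, which produces a convergent $E_1$-spectral sequence
$$E_1^{p,q} = H^q(X_p,\, a_p^* L \otimes a_p^* \cF) \;\Longrightarrow\; H^{p+q}(X, L \otimes \cF).$$
Since each $X_p$ is smooth, the motivic property for the components of the cohomology jump loci on the $X_p$ is known by \cite{EK}. The idea is then to describe $\Lie(T)$ inside $H^1(X,\C)$ as being cut out by the vanishing of tangent cocycle conditions expressible through the differentials of this spectral sequence, so that on each simplicial piece the relevant defining conditions determine sub-MHS subspaces.

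Next, using that $X$ is normal, the pullback $p^* : H^1(X,\Q) \to H^1(\widetilde X,\Q)$ is an injective morphism of MHS (this is the reason why $W_0 H^1(X,\Q) = 0$), and it induces a closed embedding of algebraic groups $p^* : \bM_B(X,1) \hookrightarrow \bM_B(\widetilde X,1)$. The subtorus $p^*(T)$ is then contained in an irreducible component $T'$ of a jump locus on $\widetilde X$ dictated by the degeneration of the spectral sequence near the identity; by \cite{EK}, $\Lie(T')$ is a sub-MHS of $H^1(\widetilde X,\Q)$. Since the preimage of a sub-MHS under an injective morphism of MHS is again a sub-MHS, this yields the desired conclusion that $\Lie(T)$ is a sub-MHS of $H^1(X,\Q)$.

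The hardest step will be the middle one: identifying precisely which component of which jump locus on $\widetilde X$ contains $p^*(T)$. The $E_\infty$-term controls the limit dimension only through inequalities, so a single jump locus on $X$ does not in general correspond to a single jump locus on $\widetilde X$; one has to take into account the differentials of the spectral sequence and simultaneous jumps of neighbouring entries on the $E_1$-page. A cleaner route is probably to leverage the absolute-set formalism for normal varieties developed earlier in the paper, which should package the MHS compatibility into the structural properties of the subsets themselves and bypass an ad hoc spectral-sequence bookkeeping.
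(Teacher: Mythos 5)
There is a genuine gap, and it sits exactly where you flag it. From the containment $p^*(T)\subset T'$ you can only conclude $\Lie(T)\subseteq (p^*)^{-1}\bigl(\Lie(T')\bigr)$, and a subspace of a sub-MHS is not in general a sub-MHS; so the last sentence of your argument does not follow unless you can produce a $T'$ with $\Lie(T')\cap p^*H^1(X,\C)=p^*\Lie(T)$, which is precisely the spectral-sequence bookkeeping you leave open. The paper avoids this entirely by the route you only gesture at in your final sentence: because $X$ is normal, $p$ has connected fibers, so $p_*p^*=\id$ on local systems, and the jump condition defining $\Sigma^i_k(\cF)$ can be rewritten as a composition of allowed functors applied to local systems on $\widetilde X$ (Theorem \ref{thm1}); hence $i\bigl(\Sigma^i_k(\cF)\bigr)=i\bigl(\bM_B(X,1)\bigr)\cap\widetilde S$ with $\widetilde S$ absolute constructible, and, using that $i\bigl(\bM_B(X,1)\bigr)$ is itself absolute constructible (Proposition \ref{lem1}), each irreducible component of $\Sigma^i_k(\cF)$ maps under the closed embedding $i$ \emph{onto} an irreducible component of an absolute constructible subset of $\bM_B(\widetilde X,1)$. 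This is what replaces your unresolved identification of ``which component of which jump locus on $\widetilde X$ contains $p^*(T)$'': no hypercovering or descent spectral sequence is needed.

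A second problem is your appeal to \cite{EK} for the smooth case. Their motivicity statement is tied to their arithmeticity hypotheses, whereas here $\cF\in D^b_c(X,\C)$ is arbitrary (this generality is exactly what the theorem claims beyond \emph{loc.\ cit.}); so on the smooth pieces you cannot simply quote it for twists of the form $a_p^*L\otimes a_p^*\cF$. The paper instead deduces motivicity on the smooth variety from the absolute-set formalism: an irreducible component of an absolute $L$-constructible set is bialgebraic for the Betti--de Rham structure \cite[7.4.6]{BW20}, and bialgebraic subtori are motivic by \cite[Theorem 3.3]{BW17}. Your final step — transporting motivicity back along the injective morphism of MHS $H^1(X,\Q)\hookrightarrow H^1(\widetilde X,\Q)$ coming from normality — does agree with the paper, but as written the proposal is incomplete at the decisive middle step and rests on a citation that does not cover the stated generality.
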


To prove Theorem 1.1 and 1.3, we build on the work in \cite{BW20} and extend the notion of absolute subsets of Betti moduli spaces to the case of normal varieties using resolutions of singularities. The idea of studying the topology of normal varieties by resolving singularities was already employed for instance in \cite{ADH}. On the other hand, the proof of Theorem 1.2 follows an ``ad-hoc" strategy involving simplicial resolutions. We also remark that our proofs use transcendental methods while the work of Esnault-Kerz uses $p$-adic geometry and Galois-theoretic arguments.

For local systems of higher rank, the picture is more complicated. Let $X$ be a smooth projective variety, $x_0\in X(\C)$ a point and denote by $\bR(X,x_0,n)$ the representation variety of rank $n$, i.e. the fine moduli space parametrizing rank $n$ representations of $\pi_1(X,x_0)$. If $\rho: \pi_1(X,x_0)\to \GL(n,\C)$ is a representation underlying a polarizable complex variation of Hodge structures ($\C$VHS), Eyssidieux and Simpson in \cite{ES} and later Lef\`evre in \cite{L1} have constructed a functorial $\C$MHS on the completed local ring $\widehat{\cO}_\rho$, where $\cO_\rho$ is the local ring at $\rho$ in $\bR(X,x_0,n)$. Note that such mixed Hodge structure is defined on an infinite dimensional vector space, cf. Definition \ref{hodge}. On $\widehat{\cO}_\rho$ there is also another functorial $\C$MHS, which is easier to describe and which is isomorphic to the $\Gr_W$ of the previous $\C$MHS, where $W$ denotes the weight filtration. It corresponds essentially to a bigrading on $\widehat{\cO}_\rho$ as a $\C$-vector space. We shall refer to it as the {\em split} $\C$MHS on $\widehat{\cO}_\rho$ (see Section \ref{higherrank} for its construction). Define the cohomology jump loci in the representation variety by $\widetilde{\Sigma}^i_k:=\pi^{-1}(\Sigma^i_k)$, where $\pi: \bR(X,x_0,n)\to \bM_{\B}(X,n)$ is the GIT quotient map. We prove that, if $k:= \dim_\C H^i(X,L_\rho)$, then the ideal $I\subset\widehat{\cO}_\rho$ cutting the locus $\widetilde{\Sigma}^i_k$ at $\rho$ is a sub-MHS of the split MHS on $\widehat{\cO}_\rho$ (see Theorem \ref{tangent}). In other words, we prove that $I$ is compatible with the bigrading corresponding to the split MHS introduced above. This can be seen as a higher rank analogue of the motivicity result of \cite{EK}.

In the last section, we complement some results of \cite{B09} concerning jump loci associated to the Hodge theory of $H^\bullet(X,L)$ where $X$ is a smooth algebraic variety and $L$ is a unitary local system of rank one (thus underlying a polarizable $\C$VHS of type $(0,0)$). More precisely, we prove:

\begin{thm}[\ref{hodgejump}]
Let $U$ be a smooth complex algebraic variety and $\bU(U,1)\subset \bM_{\B}(U,1)(\C)$ the union of real subtori parametrizing rank one unitary local systems on $U$. Let $\cA$ be the Boolean algebra generated by torsion-translated real subtori of $\bU(U,1)$.

(1) For all $a,i,k\in\Z_{\ge 0}$, the set
$$ W^{a,i,k}:= \{L \in \bU(U,1):   \dim_\C Gr_W^aH^i(U,L)\ge k\} $$
belongs to $\cA$.

(2) Let $j: U\to X$ be an open embedding into a smooth proper variety $X$ such that $X\setminus U$ is a simple normal crossing divisor. For all $p,q,k\in\Z_{\ge 0}$, the set 
$$V^{p,q,k} := \{L \in \bU(U,1):  \dim_\C Gr^F_p \IH^{p+q}(X,L)\ge k\}$$
belongs to $\cA$. Here $\IH$ denotes intersection cohomology.
\end{thm}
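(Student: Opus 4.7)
Our plan, for both parts, is to express each Hodge-graded dimension as a Boolean combination of dimensions of coherent cohomology groups of algebraically varying families of coherent sheaves on smooth projective varieties parametrized by $L\in\bU(U,1)$, so that the coherent jump-loci results of \cite{B09} apply on each stratum of a natural decomposition of $\bU(U,1)$ whose strata lie in $\cA$.

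For part (1), we fix a smooth compactification $j:U\into X$ with $D=X\setminus U$ a simple normal crossings divisor of smooth components $D_1,\dots,D_N$, set $D_I:=\bigcap_{\alpha\in I}D_\alpha$, and invoke Timmerscheidt's theorem (\emph{cf.}~Esnault-Viehweg) to identify
$$H^i(U,L)\simeq\bbH^i\bigl(X,\Omega^\bullet_X(\log D)\otimes\cE_L\bigr)$$
with its MHS, whose weight filtration is induced by the pole-order filtration on the log-de Rham complex of the Deligne canonical extension $\cE_L$ of $L\otimes\cO_U$. The associated weight spectral sequence degenerates at $E_2$ with
$$E_1^{-r,i+r}(L)=\bigoplus_{|I|=r}H^{i-r}\bigl(D_I,L|_{D_I\cap U}\bigr)(-r),$$
the summand indexed by $I$ vanishing whenever $L$ has non-trivial monodromy around some $D_\alpha$, $\alpha\in I$, by the residue analysis for unitary coefficients. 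We then stratify $\bU(U,1)$ by the torsion-translated real subtori $T_I:=\{L:L\text{ has trivial monodromy around every }D_\alpha,\ \alpha\in I\}$, whose Boolean combinations lie in $\cA$. Proceeding by induction on $\dim U$ (the base case $\dim U=0$ being trivial), on each $T_I$ the restriction $\bU(U,1)\to\bU(D_I\cap U,1)$ is a continuous group homomorphism whose preimages of torsion-translated real subtori are again torsion-translated real subtori, so the inductive hypothesis applied to the smooth quasi-projective $D_I\cap U$ places the $E_1$-jump loci in $\cA$. The $d_1$-differentials are Gysin-type morphisms of algebraic families of coherent sheaves on $T_I$, whose rank-jumps are controlled by coherent-cohomology jumps of kernel and image sheaves, again in $\cA$ by \cite{B09}; a Boolean combination yields $W^{a,i,k}\in\cA$.

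For part (2), we appeal to Saito's theory of mixed Hodge modules: $\mathrm{IC}_X(L)$ is a polarizable pure Hodge module of weight $\dim X$, and its Hodge filtration is computed by the stupid filtration on the log-de Rham complex twisted by an appropriate \emph{middle} canonical extension $\cE_L^{\mathrm{mid}}$ of $L\otimes\cO_U$ (with eigenvalues of residues in $(-1,0]$, say). The decomposition theorem on the projective $X$ gives $E_1$-degeneration of the corresponding Hodge-to-de Rham spectral sequence, so
$$\Gr_F^p\IH^{p+q}(X,L)=H^q\bigl(X,\Omega^p_X(\log D)\otimes\cE_L^{\mathrm{mid}}\bigr).$$
Stratifying $\bU(U,1)$ as in part (1), over each $T_I$ the sheaf $\cE_L^{\mathrm{mid}}$ is the fiber of a universal coherent family on $X\times T_I$, and applying \cite{B09} on each stratum together with a Boolean combination yields $V^{p,q,k}\in\cA$.

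The main difficulty, common to both parts, will be to piece the canonical extensions $\cE_L$ and $\cE_L^{\mathrm{mid}}$ together into genuine algebraic families on $\bU(U,1)$: while the Deligne normalization is continuous in $L$ on each stratum of the residue decomposition, it jumps precisely when a monodromy character crosses the value $1$, forcing the piecewise argument over that stratification. A secondary technical point specific to part (1) will be the conversion of rank-jumps of the $d_1$-differentials into coherent-cohomology jump loci, which we expect to handle by realizing kernel and image as cohomology sheaves of a two-term complex of coherent sheaves in the family and invoking \cite{B09} again.
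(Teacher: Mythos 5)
Your approach to part (2) is essentially the paper's: both identify $\Gr_F^p\IH^{p+q}(X,L)$ with $H^q(X,\Omega^p_X(\log D)\otimes\cE_L)$ for a Deligne-type extension $\cE_L$, both stratify $\bU(U,1)$ by triviality of monodromy around the components of $D$, and both exploit $E_1$-degeneration together with upper-semicontinuity on each stratum. The paper is more careful on exactly the point you flag as "the main difficulty": it explicitly constructs the coherent family $\mathfrak{L}^{>-1}$ interpolating the Deligne extensions over a small analytic neighborhood of a point off the bad locus $\cT$, and handles the case $L\in\cT$ by inducting on the number of boundary components and shrinking $U$. Your proposal leaves that construction to the reader, but it is the same strategy.

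Your approach to part (1), however, diverges from the paper's and contains a genuine gap. You try to compute $\dim\Gr_W^a H^i(U,L)$ through the weight spectral sequence, which degenerates only at $E_2$; this forces you to control the rank of the Gysin $d_1$-differentials as $L$ varies. You say this "can be handled by realizing kernel and image as cohomology sheaves of a two-term complex of coherent sheaves and invoking \cite{B09}," but \cite{B09} gives jump-loci control for $\dim H^q(X,\cF_L)$ of a flat family of coherent sheaves, not for ranks of maps between such cohomology groups in a family. That is a real additional difficulty, and in fact the author's final Remark in the paper identifies precisely this obstacle (controlling "the dimension of the cohomology of the maps between them") as the reason the analogous statement for \emph{mixed} Hodge numbers $\Gr_p^F\Gr_W^{p+q}H^k(U,L)$ remains open. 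The paper sidesteps the entire issue for part (1) with a much cleaner observation: for unitary $L$, the weight filtration on $H^i(U,L)$ coincides (up to renumbering) with the Leray filtration of $j:U\hookrightarrow X$ (Remark to Theorem 7.1 in \cite{T87}), and the Leray filtration is built from the sheaf-theoretic truncation functors, so Theorem \ref{mainres} (Budur--Wang, \cite[Cor.\ 6.4.4]{BW20}) applies directly to show the jump loci of its graded pieces are absolute $\Q$-constructible, hence in $\cA$. You should look for this kind of functorial characterization of the weight filtration rather than attempting to compute $E_2$ term-by-term.
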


\paragraph{\bf{Notation}} A complex algebraic variety is an integral, separated scheme of finite type over $\Spec(\C)$. Unless stated otherwise, a ``variety" is a complex algebraic variety. If $X$ is a complex algebraic variety, we write $LocSys(X)$ for the category of local systems of finite dimensional complex vector spaces on the associated complex analytic space $X^{\mathrm{an}}$. If $K$ is a field, then $D^b_c(X,K)$ denotes the bounded derived category of constructible sheaves of $K$-vector spaces on $X^{\mathrm{an}}$. If $\cC$ is a category, then $\iso(\cC)$ denotes the ``collection" of isomorphism classes of objects of $\cC$. We will consider isomorphism classes of objects only when $\cC$ is an essentially small category, therefore $\iso(\cC)$ will be an honest set (in our case, $\cC$ will be one of the following, for a complex algebraic variety $X$: $D^b_c(X,\C)$, $LocSys(X)$ or the bounded derived category of regular holonomic $\cD_X$-modules). The abbreviation ``dg" (resp. ``dga", ``dgla") will mean ``differential-graded" (resp. ``differential-graded algebra", ``differential-graded Lie algebra").
\medskip
\paragraph{\bf{Acknowledgements}} I wish to thank Nero Budur for introducing me to the topics of this paper as well as for his suggestions and support. I thank Bruno Klingler for the valuable conversations, for his support and for providing comments on an early draft. I also thank Carlos Simpson and Hélène Esnault for their comments. I benefited from email correspondence with Joana Cirici and Louis-Clément Lefèvre. Finally, I would like to thank the referees for their remarks and corrections. This work is part of my PhD thesis.

%%%%%%%%%%%%% SECTION 2

\section{Absolute sets for normal varieties}
\subsection{Absolute $K$-constructible sets}\label{sec_defi_abs_constr} 
In this section we recall the formalism of absolute constructible subsets on a smooth complex variety, introduced in \cite{BW20}.

Let $X$ be a smooth complex algebraic variety. We denote by $\cD_X\subset\cH om_\C(\cO_X,\cO_X)$ the sheaf of differential operators on $X$. The Riemann-Hilbert correspondence is an equivalence of categories
$$RH: D^b_{rh}(\cD_X)\xrightarrow{\sim} D^b_c(X,\C)$$
between the bounded derived category of regular holonomic $\cD_X$-modules and the bounded derived category of constructible sheaves on $X^\an$ (see e.g. \cite{HTT}).

Given an automorphism $\sigma\in \Aut(\C)$, let $X^\sigma\to X$ be the base change of $X$ via $\sigma$. 
It is an isomorphism of $\Q$-schemes however the induced map on the analytifications is far from being continuous in general. 
Consider the diagram
\begin{equation}\label{disp_diag_base_change}
\begin{tikzcd}
D^b_{rh} (\cD_X)   \arrow[d,"RH"] \arrow[r,"p_\sigma"] & D^b_{rh}(\cD_{X^\sigma}) \arrow[d,"RH"]\\
D^b_c(X,\C) & D^b_c(X^\sigma,\C).
\end{tikzcd}
\end{equation}
where $p_\sigma$  is the pullback of $\cD_X$-modules under the base change over $\sigma$. 
On the sets of isomorphism classes of objects, the maps induced by the diagram (\ref{disp_diag_base_change}) are bijections.
Notice that there is no natural arrow $D^b_c(X,\C) \to D^b_c(X^\sigma,\C)$ making the diagram commute.

For a subset $T\subset \iso(D^b_c(X,\C))$, define
\begin{equation}\label{eqSss}
T^\sigma := RH\circ p_\sigma\circ RH^{-1} (T)\subset \iso(D^b_c(X^\sigma,\C)). 
\end{equation}
If $n\ge 1$ is an integer and $S\subset \bM_{\B}(X,n)(\C)$ is a subset, then, regarding the elements of $S$ as isomorphism classes of semisimple local systems, $S$ determines a subset of $\iso(D^b_c(X^\sigma,\C))$. 
We define $S^\sigma$ as in (\ref{eqSss}). 
Since Galois conjugation preserves flat connections and semisimplicity, we deduce that $S^\sigma$ corresponds to a subset of $\bM_{\B}(X^\sigma,n)(\C)$. Recall that since $\bM_{\B}(X,n)$ is defined over $\Q$ (even over $\Z$), the notion of $K$-constructible subset of $\bM_{\B}(X,n)(\C)$, for a subfield $K\subset \C$, is well-defined. 

\begin{defi}\label{defi_abs_constr}
Let $K\subset \C$ be a subfield. 
\begin{enumerate}
\item A subset $S\subset  \bM_{\B}(X,n)(\C)$ is absolute $K$-constructible (resp. absolute $K$-closed) if it is $K$-constructible (resp. $K$-closed) and if, for all $\sigma\in\Aut(\C)$, the subset $S^\sigma\subset  \bM_{\B}(X^\sigma,n)(\C)$ defined above is $K$-constructible (resp. $K$-closed).
\item An object $\cF\in D^b_c(X,K)$ is an \emph{absolute $K$-point} if for all $\sigma\in \Aut(\C)$, there exists an element $\cG\in D^b_c(X^\sigma,K)$ such that 
$$p_\sigma \circ RH^{-1}(\cF_\C) \cong RH^{-1}(\cG_\C).$$
\end{enumerate}
\end{defi}
\begin{rem}
\begin{enumerate}
\item The above definition differs slightly from the one given in \cite[Definition 6.3.1]{BW20} but is seen to be equivalent, according to \cite[Proposition 4.6]{BLW21}.
We chose to present it in this way to avoid introducing the language of unispaces which will not be needed in the sequel.
\item In the case of a smooth projective variety, the absolute constructible subsets were defined by Simpson in \cite[Section 6]{S93}.
\end{enumerate}
\end{rem}

It is easy to see that if $\cF$ is of geometric origin in the sense of \cite[6.2.4]{BBD} then $\cF$ is an absolute $\Qbar$-point. Conversely, it is conjectured that if $\cF$ is a semisimple perverse sheaf over a smooth variety then $\cF$ being an absolute $\Qbar$-point should imply that $\cF$ is of geometric origin, see \cite[Conjecture 10.2.2 (6)]{BW20}.

\begin{rem}\label{rem_abs_vs_moduli_abs}
There is a more geometric version of the definition of absolute constructibility, called {\em moduli-absolute constructibility} in \cite{BW20}. 
Let $X$ be a smooth {\em quasi-projective} variety, $\overline{X}$ a smooth compactification with normal crossing divisor $D$. 
Nitsure \cite{N93} and Simpson \cite{S94a} constructed a scheme $\bM_{\dR}(\overline{X}/D,n)$ that is a coarse moduli space parametrizing Jordan equivalence classes of semistable logarithmic connections of rank $n$, with poles at most on $D$. 
The Riemann-Hilbert correspondence induces an analytic morphism
\begin{equation}\label{disp_RH_an}
RH^\an: \bM_{\dR}(\overline{X}/D,n)^\an\to \bM_{\B}(X,n)^\an.
\end{equation}
A subset $S\subset \bM_{\B}(X,n)(\C)$ is called {\em moduli-absolute $K$-constructible} \cite[Definition 7.4.1]{BW20} if it is $K$-constructible and $RH^{\an}\circ p_\sigma\circ (RH^{\an})^{-1}(S)\subset  \bM_{\B}(X^\sigma,n)$ is $K$-constructible, for all $\sigma\in \Aut(\C)$. Here $p_\sigma:  \bM_{\dR}(\overline{X}/D,n)\to \bM_{\dR}(\overline{X}^\sigma/D^\sigma,n)$ is defined by pullback of semistable logarithmic connections. 

As pointed out in \cite[Section 5.6]{BLW21}, it is not known in general whether $RH^\an$ is surjective.
This leads to problems with the previous definition of moduli-absoluteness.
However, in rank one, it is certainly known that $RH^\an$ is surjective and the two definitions of absoluteness coincide \cite[Proposition 7.4.4]{BW20}. This fact will be used below in the proof of Proposition \ref{thm_motiv_rank_one}.
\end{rem}

\paragraph{\textbf{Arithmeticity.}}
Definition \ref{defi_abs_constr}(2) is reminiscent of the arithmeticity condition of Esnault-Kerz \cite[Definition 1.4]{EK}, which applies more generally to non-smooth varieties. We recall it here for convenience. Let $F\subset \C$ be a finitely generated field, $X$ a geometrically connected algebraic variety over $F$ and $p$ a prime number. We denote by $X_{\C,\et}$ the \'etale site of $X_\C$. An object $\cF\in D^b_c(X_{\C,\et},\Qbar_p)$ is {\em arithmetic} if there exists a finitely generated field extension $F\subset F'$ such that for all $\sigma\in \Aut(\C/F')$, the complex $\sigma(\cF)$ is isomorphic (in the derived category) to $\cF$. An object $\cF\in D^b_c(X,\C)$ is {\em arithmetic} it there exists a number field $K$ such that $\cF$ descends to $D^b_c(X,K)$ and such that for infinitely many embeddings $K\hookrightarrow \Qbar_p$, $\cF$ induces an arithmetic object $\cF_{\et}\in D^b_c(X_{\C,\et}, \Qbar_p)$. 

Assume now that $X$ is smooth and consider $\cF\in D^b_c(X,\C)$: we do not know whether the assumption that $\cF$ is arithmetic implies that $\cF$ is an absolute $\Qbar$-point, or vice versa. The two notions are equivalent if $\cF$ is a local system of rank one on $X$. Indeed, it follows from \cite[Theorem 5.2]{EK} that in this case $\cF$ is arithmetic if and only if it is torsion. On the other hand, a local system of rank one defines an absolute $\Qbar$-point if and only if it is torsion, by \cite[Theorem 9.1.2]{BW20}. 

\subsection{Construction of absolute subsets} 
The main result of \cite{BW20} establishes absolute $K$-constructibility for a wide collection of ``functorially defined loci" in $\bM_{\B}(X,n)(\C)$. We summarize here what we need. 

Let $X$ be a smooth algebraic variety, $f: X\to Y$ a morphism of algebraic varieties. Fix a subfield $K\subset\C$. Consider the following list of functors defined on the derived category $D^b_c(X,\C)$:
\begin{equation}\label{disp_list_functors}
(\star)_K :\qquad f^*,Rf_*, f^!, Rf_!, \mathcal{RH}om(\cF_\C,\cdot), \cH^k, \psi_g,\phi_g, j_{!*}, \cdot\otimes\cF_\C,
\end{equation}
where $\cF\in D^b_c(X,K)$ is an absolute $K$-point, $g$ is an algebraic function $X\to \C$ and $j:X\hookrightarrow Z$ is an open embedding. For the definition of these functors see e.g. \cite{Dimca}. The above list depends on the subfield $K$ only through $\cF\in D^b_c(X,K)$.

\begin{defi}\label{defi_smooth_composition}
A function $\phi: \iso(D^b_c(X,\C))\to \Z$ is a \emph{smooth composition of functors in ($\star)_K$} if there exist $n\in \Z_{\ge 1}$ and smooth varieties $X_1,\ldots, X_n$ such that $\phi$ is equal (after taking isomorphism classes of objects) to a composition
\begin{multline}\label{disp}
D^b_c(X,\C)\to D^b_c(X_1,\C)\to\ldots \\ \ldots \to D^b_c(X_n,\C)\to D^b_c(pt,\C)\xrightarrow{\cH^i}LocSys(pt)\xrightarrow{\mathrm{rank}} \Z
\end{multline}
where each each map $D^b_c(X_i,\C)\to D^b_c(X_{i+1},\C)$ belongs to the list $(\star)_K$.
\end{defi}

\begin{thm}(\cite[6.4.3, 7.4.4]{BW20})\label{mainres}
Let $X$ be a smooth variety and $\phi: \iso(LocSys(X))\to \Z$ a map obtained as
$$\iso(LocSys(X))\to \iso(D^b_c(X,\C))\xrightarrow{\psi}\Z,$$
where $\psi$ is a smooth composition of functors in ($\star)_K$. Then for all $k,n\in\Z_{\ge 0}$, the sets $\{L\in \bM_{\B}(X,n)(\C): \phi(L) = k\}$ are absolute $K$-constructible.
\end{thm}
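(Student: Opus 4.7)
The plan is to induct on the length $n$ of the chain of functors defining $\psi$ in (\ref{disp}), and at each step to verify that the corresponding transition is a ``$K$-morphism of unispaces'' in the sense of \cite{BW20}, i.e.\ commutes with the $\Aut(\C/K)$-action once transferred through the Riemann-Hilbert correspondence. The last map $\iso(LocSys(pt))\xrightarrow{\mathrm{rank}}\Z$ has constructible, $\Q$-defined fibers, so once the chain is established as a composition of $K$-morphisms of unispaces, absolute $K$-constructibility of $\{L:\phi(L)=k\}$ follows by pulling back these fibers. In parallel one needs a semicontinuity statement, so that the level sets are actually constructible (and not just $\sigma$-equivariant at the set-theoretic level); this is the content of \cite[Theorem 6.4.3]{BW20} once the unispace structure is in place.

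The bulk of the work is to verify the Galois equivariance for each functor in $(\star)_K$ separately. For the purely algebraic members of the list, namely $f^*, Rf_*, f^!, Rf_!, \cH^k$, and $\cdot\otimes\cF_\C$, one transfers the functor to the regular holonomic $\cD$-module side; there, base change along $\sigma\in\Aut(\C)$ is built into the six-functor formalism, because all the constructions are given by algebraic formulas in algebraic $\cD$-modules. For tensoring with (or taking $\mathcal{RH}om$ into) $\cF_\C$, the compatibility with $p_\sigma$ is a tautology given Definition \ref{absolutedefi}: the hypothesis that $\cF$ is an absolute $K$-point says precisely that $p_\sigma RH^{-1}(\cF_\C)$ descends to $D^b_c(X^\sigma,K)$, which is exactly what makes tensoring a $K$-morphism of unispaces.

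The main obstacle will be the remaining, transcendentally defined functors $\psi_g,\phi_g$ and the intermediate extension $j_{!*}$. My plan would be to eliminate the transcendence by passing to $\cD$-modules: for $\psi_g,\phi_g$, use the Kashiwara--Malgrange $V$-filtration, whose functoriality with respect to base change along field automorphisms is formal; for $j_{!*}$, use its characterization as the image of $j_!$ in $Rj_*$ in the abelian category of holonomic $\cD$-modules, so that Galois compatibility is inherited from the already-treated $j_!$ and $Rj_*$. Assembling these compatibilities, every step in (\ref{disp}) becomes a semicontinuous $K$-morphism of unispaces, and the conclusion of the theorem then follows by composing and invoking \cite[6.4.3]{BW20} together with the compactification-independence of absoluteness recorded in \cite[7.4.4]{BW20}.
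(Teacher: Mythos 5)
The statement you are reconstructing is cited in this paper from \cite[6.4.3, 7.4.4]{BW20} and is not reproved here, so the comparison must be against the Budur--Wang argument itself rather than an in-text proof. Your outline does capture the correct overall shape of that argument: reduce to showing that each functor in $(\star)_K$ is a semi-continuous $K$-morphism of unispaces, then compose and pull back the constructible fibers of the rank map; and transferring the transcendental operations to the $\cD$-module side (via the Kashiwara--Malgrange $V$-filtration for $\psi_g,\phi_g$, and via the image description after perverse truncation for $j_{!*}$) is the right way to make $\Aut(\C)$-equivariance formal. Your remark that compatibility of $\cdot\otimes\cF_\C$ and $\mathcal{RH}om(\cF_\C,\cdot)$ with $p_\sigma$ follows tautologically from Definition \ref{absolutedefi} is also consistent with how \cite{BW20} exploit absolute $K$-points.

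The genuine gap in the proposal is that it treats the Galois-equivariance as the hard part and the constructibility as a parenthetical side condition, whereas in \cite{BW20} the weight is the other way around. After applying a functor such as $Rf_*$, $\psi_g$ or $j_{!*}$ the output is a general constructible complex, not a local system, so there is no preexisting moduli space of outputs on which ``a morphism of moduli'' can even be defined; the entire unispace formalism of \cite[\S 2--\S 6]{BW20} exists precisely to build such parametrizing objects (fixing stratifications, bounding local monodromy data, handling the non-finite-type de Rham moduli) and to verify, functor by functor, that the induced correspondence between unispaces is algebraic and semi-continuous. Establishing this is the main technical content of the theorem, not a corollary of $\sigma$-equivariance, and your proposal does not engage with it beyond the phrase ``in parallel one needs a semicontinuity statement.'' Relatedly, the role of \cite[7.4.4]{BW20} is not merely a final cleanup step about compactification-independence: it is what makes the notion of absolute $K$-constructible well posed when one changes compactification or stratification along the chain, and thus has to be threaded through the induction rather than invoked once at the end.
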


\subsection{Absoluteness on normal varieties}\label{par_setup} Let $X$ be a normal complex algebraic variety (one could weaken this assumption by taking $X$ to be unibranch: in that case, the normalization induces an homeomorphism on the underlying topological spaces. All functors considered in the sequel depend only on the homeomorphism type of $X^\an$). 
Let $p: \widetilde{X}\to X$ be a resolution of singularities which is an isomorphism over a Zariski open subset $U\subset X$.
By normality of $X$ and $\widetilde{X}$, the maps $\pi_1(U)\to \pi_1(X)$ and $\pi_1(U)\to \pi_1(\widetilde{X})$ are surjective (the latter map is obtained by considering $U$ as an open in $\widetilde{X}$). 
This follows from \cite[0.7 (B)]{FL80}.
The diagram
$$ 
\begin{tikzcd}
  \pi_1(U) \arrow[r,twoheadrightarrow] \arrow[dr,twoheadrightarrow] &  \pi_1(\widetilde{X}) \arrow[d, "p_*"]\\
&\pi_1(X)
\end{tikzcd}
$$
shows that $p_*$ is surjective. 
This implies that the map $i: \bM_{\B}(X,n)\to \bM_{\B}(\widetilde{X},n)$ obtained by functoriality is actually a closed embedding. 
This is immediate at the level of representation varieties: if $\widetilde{x}\in\widetilde{X}(\C)$ is a base point and $x=p(\widetilde{x})\in X(\C)$, then the representation variety $\bR_{\B}(X,x,n)$ is cut scheme-theoretically inside $\bR_{\B}(\widetilde{X},\widetilde{x},n)$ by the equations $\rho(\gamma)=\mathrm{id}$, for $\gamma\in \ker(\pi_1(\widetilde{X},\widetilde{x})\to \pi_1(X,x))$. 
Moreover, the image of $\bR_{\B}(X,x,n)$ in $\bR_{\B}(\widetilde{X},\widetilde{x},n)$ is invariant under the action of $\GL_n$.
Finally, a surjection at the level of rings of functions induces a surjection on their ring of invariants (for rings in characteristic $0$ and reductive groups acting on them see the proof of Theorem 1.1 in \cite[Ch. 1]{MFK}).

\begin{defi}\label{defi_abs_constr_normal}
Let $K\subset \C$ be a subfield. 
Let $X$ be a normal variety, $p: \widetilde{X} \to X$ a resolution of singularities and $i: \bM_{\B}(X,n)\hookrightarrow \bM_{\B}(\widetilde{X},n)$ the induced closed embedding. A subset $S\subset \bM_{\B}(X,n)(\C)$ is {\em absolute $K$-constructible} if $i(S)$ is absolute $K$-constructible.
\end{defi}

We show that the above notion of absoluteness does not depend on the choice of resolution and that $\bM_{\B}(X,n)$ itself is absolute $\Q$-constructible.

\begin{lem}\label{lem0}
Let $X$ be a normal algebraic variety, $p_1:Y_1\to X$ and $p_2: Y_2\to X$ two resolutions of singularities. Denote by $i_1$ (resp. $i_2$) the closed embedding of $\bM_{\B}(X,n)$ in $\bM_{\B}(Y_1,n)$ (resp. in $\bM_{\B}(Y_2,n)$). Let $S\subset \bM_{\B}(X,n)(\C)$ be a subset such that $i_1(S)$ is absolute $K$-constructible. Then $i_2(S)$ is absolute $K$-constructible.
\end{lem}

\begin{proof}
Dominate $Y_1$ and $Y_2$ by a common resolution $Y_3$, obtaining a diagram
$$
\begin{tikzcd}
 & \bM_{\B}(Y_3,n) &\\
\bM_{\B}(Y_2,n) \arrow[ur] & & \bM_{\B}(Y_1,n) \arrow[ul]\\
 & \bM_{\B}(X,n) \arrow[ur] \arrow[ul]&
\end{tikzcd}
$$
where the maps are closed embeddings. 
Let $i_3: \bM_{\B}(X,n)\to  \bM_{\B}(Y_3,n)$ be the composition of the closed embeddings.
After conjugation by an element $\sigma\in \Aut(\C)$ the varieties $Y_i^\sigma$, $i=1,2$ are still resolution of singularities of $X^\sigma$ and one obtains the same diagram of Betti moduli spaces for the conjugated varieties. 
By the functoriality of the Riemann-Hilbert correspondence and the hypothesis that $i_1(S)$ is absolute $K$-constructible, we deduce that $i_3(S)$ is an absolute $K$-constructible subset of $\bM_{\B}(Y_3,n)(\C)$.
Again by functoriality and the fact that the maps are closed embeddings, we deduce that $i_2(S)$ is absolute $K$-constructible as well.
\end{proof}

In the following proposition we consider, along with the Betti moduli space, the {\em Dolbeault} moduli space $\bM_{\Dol}(Y,n)$ for a smooth projective variety $Y$. Its complex points parametrize polystable Higgs bundles of rank $n$ with vanishing Chern classes. There is an algebraic action of $\C^\times$ on $\bM_{\Dol}(Y,n)$ given by scalar multiplication of the Higgs field. For the definition of these objects we refer to \cite{S92}.

\begin{prop}\label{prop_embedding_betti_moduli}
Let $X$ be a normal quasi-projective variety and $p:\widetilde{X}\to X$ a resolution of singularities. 
Let $n$ be a positive integer and $S\subset \bM_{\B}(\widetilde{X},n)(\C)$ be the image of $\bM_{\B}(X,n)(\C)$ under the closed embedding. 
We have the following:

(1) $S$ is a closed, absolute $\Q$-constructible subset.

(2) Let $U(1):=\{z\in \C: |z| = 1\}$. 
Assume that $X$ is projective. 
Then $S$ is invariant under the $U(1)$-action deduced from the $U(1)$-action on the Dolbeault moduli space $\bM_{\Dol}(\widetilde{X},n)$ by the Simpson correspondence.
\end{prop}

\begin{rem}
(1) The second point in the above proposition is related to a question in \cite[Remark 7.4.2]{BW20} that asks if an absolute $\Qbar$-constructible set in $\bM_{\B}(Y,n)$ (for $Y$ smooth projective) is invariant under the action of $\C^\times$ deduced from the Simpson correspondence. 
The question is still open.

(2) The $U(1)$-invariance of $i(\bM_{\B}(X,n)(\C))$ implies, by the same arguments of \cite[\S 4]{S92}, that fundamental groups of normal projective varieties cannot be isomorphic to rigid lattices in reductive groups that are not of Hodge type. 
Indeed, with the above notations, the map $\bM_{\B}(X,n)\to \bM_{\B}(\widetilde{X},n)$ is a closed embedding hence an isolated point of $\bM_{\B}(X,n)^\an$ will remain isolated in $i(\bM_{\B}(X,n)^\an)\subset \bM_{\B}(\widetilde{X},n)^\an$. 
Therefore it must be $U(1)$-invariant. 
The fact that fundamental groups of normal varieties should obey the same restrictions coming from the theory of harmonic maps is noticed at the end of \cite[Ch. 1, Section 2.3]{ABCKT}.
\end{rem}

\begin{proof}[Proof of Proposition \ref{prop_embedding_betti_moduli}]
(1) The set $S$ coincides with the set of rank $n$ semisimple local systems on $\widetilde{X}$ that are constant along the fibers of $p$. 
Indeed if $L$ is such a local system then $p_*L$ is a local system on $X$ (by connectedness of the fibers of $p$), semisimple and such that  $L=p^*p_*L$. 
By Lemma \ref{lem0}, we can and will assume that $\widetilde{X}$ is quasi-projective and that the morphism $p$ is projective.
Thus, there is a closed embedding $\widetilde{X}\hookrightarrow \bbP^N\times X$ of $X$-schemes. 
Choose also a closed embedding $X\hookrightarrow Y$ inside a smooth variety $Y$ (note that here we use the hypothesis of quasi-projectivity of $X$). 
Denote by $i: \widetilde{X}\hookrightarrow \bbP^N\times Y$ the closed embedding deduced from the above. 
Fix a point $x\in X(\C)$ and consider the closed embedding $f_x: \bbP^N\times\{x\}\hookrightarrow \bbP^N\times Y$. 
If $L$ is a local system of rank $n$ on $\widetilde{X}$ then $L$ is constant on the fiber $p^{-1}(x)$ if and only if $H^0(p^{-1}(x), L|_{p^{-1}(x)}) \cong \C^n$, i.e. if and only if $\dim_\C H^0(\bbP^N\times \{x\}, f_x^*i_*L) = n$. 
Consider the composition of functors
$$LocSys(\widetilde{X})\xrightarrow{i_*} Sh_c(\bbP^N\times Y)\xrightarrow{f_x^*} Sh_c(\bbP^N\times \{x\})\xrightarrow{\mathrm{rank}\,\circ\, H^0}\Z.$$
Here $Sh_c$ denotes the category of constructible sheaves.
All the varieties involved are smooth.
After passing to the derived categories and isomorphism classes of objects, the composite map is a smooth composition of functors in $(\star)_\Q$, in the sense of Definition \ref{defi_smooth_composition}. 
By Theorem \ref{mainres} we conclude that the set $S_x\subset\bM_{\B}(\widetilde{X},n)(\C)$ of rank $n$ local systems that are constant on $p^{-1}(x)$ is absolute $\Q$-constructible.
Moreover it is closed: calling $\phi$ the above composition of functors, we have that $S_x = \{ L\in \bM_{\B}(\widetilde{X},n)(\C): \phi(L)=n\} = \{ L\in \bM_{\B}(\widetilde{X},n)(\C): \phi(L)\ge n\}$, and the latter set is closed by upper semi-continuity. 
We have $S=\bigcap_{x\in X(\C)} S_x$. By Noetherianity of the Zariski topology the intersection is finite.
It is not difficult to check that a finite intersection of absolute $K$-constructible subsets is again absolute $K$-constructible.

\medskip

(2) First observe that the claim is independent of the choice of resolution (dominate two resolutions by a common one and use that the $\C^\times$-action on the moduli spaces commutes with pullbacks). 
Let $p: \widetilde{X}\to X$ be a resolution of singularities. 
Arguing as in the proof of part (1), there exists a finite number of points $x_1,\ldots, x_l\in X(\C)$ such that $S=\bigcap_{j=1}^l S_{x_j}$, where $S_{x_j}\subset \bM_{\B}(\widetilde{X},n)(\C)$ is the closed subset of local systems on $\widetilde{X}$ which are constant on $p^{-1}(x_j)$. 
Each reduced fiber $p^{-1}(x_j)$ is a connected, possibly reducible subscheme of $\widetilde{X}$. 
If its irreducible components are not smooth, by the embedded resolution of singularities \cite[Main Theorem II]{H64} we can further blowup $\widetilde{X}$ along smooth centers contained in $p^{-1}(x_j)$ to obtain a resolution of singularities $q:\widetilde{X}'\to X$ such that $q^{-1}(x_j)$ has smooth irreducible components. 
By the birational invariance of the fundamental group, one also has $\pi_1(\widetilde{X}')\cong \pi_1(\widetilde{X})$ and the local systems on $\widetilde{X}$ that are constant on $p^{-1}(x_j)$ correspond to the local systems on $\widetilde{X}'$ that are constant on $q^{-1}(x_j)$. 
Proceeding like this for each point $x_j$, we can assume from the beginning that all the fibers $p^{-1}(x_j), j=1,\ldots,l$, have smooth irreducible components.

Let $E$ be a semisimple local system of rank $n$ on $\widetilde{X}$ and denote by $(\cE,\theta)$ the Higgs bundle associated to $E$ by the Simpson correspondence. We also fix a K\"ahler metric on $\widetilde{X}$. Write $\cE^{\infty}$ for the $C^\infty$-bundle associated to $\cE$. Recall that $\cE^{\infty}$ has the structure of harmonic bundle: there is an Hermitian metric $K$ on $\cE^{\infty}$, operators $\bar{\partial}$, $\partial_K$, $\theta$ and $\bar{\theta}_K$ acting on sections of $\cE^{\infty}$ such that 
$$D_K = \bar{\partial}+\partial_K+\theta+\bar{\theta}_K$$
is a connection and $E$ is recovered as the sheaf of flat sections: see for instance the discussion in \cite[\S 1, Constructions]{S92}. If $\lambda\in U(1)$, the harmonic bundle associated to $(\cE,\lambda\theta)$ is given by $\cE^{\infty}$ with the {\em same} metric $K$ and operators $\bar{\partial}$, $\partial_K$, $\lambda\theta$ and $\bar{\lambda}\bar{\theta}_K$. We write $D_{K,\lambda} := \bar{\partial}+\partial_K+\lambda\theta+\bar{\lambda}\bar{\theta}_K$.

Assume that $E$ is constant on a fiber $Z:=p^{-1}(x_j)$ for some $1\le j\le l$. We claim that if $\lambda\in U(1)$ then $\lambda E$ is again constant on $Z$ (here $\lambda E$ denotes the local system associated to the Higgs bundle $(\cE,\lambda\theta)$). Write $Z=\cup_{i=1}^n Z_i$ with $Z_i$ the irreducible components of $Z$, which are smooth projective. Since $E|_{Z_i}$ is constant, the functoriality of Simpson correspondence implies that $(\cE|_{Z_i},\theta)$ is isomorphic as Higgs bundle to the trivial Higgs bundle $(\cO_{Z_i}^n, 0)$. This implies that the restriction of $\theta$ to $Z_i$ is identically zero and $E$ corresponds to the sheaf of holomorphic sections $s$ of $\cE^{\infty}$ with $\partial_Ks= 0$. In particular the sheaf of sections $s$ of $\cE^{\infty}|_{Z_i}$ such that $D_{K,\lambda}s = 0$ is independent of $\lambda$. 

Let $\alpha:[0,1]\to Z$ be a continuous loop. We can assume that $\alpha$ is homotopic to a piecewise $C^\infty$ loop that is a composition of $C^\infty$ paths $\alpha_i$ connecting two points $u_i, v_i$, $i=0,\ldots, N$ in the same irreducible component, with $v_i = u_{i+1}$ and such that each $\alpha_i$ is contained in one irreducible component. The monodromy around $\alpha$ of $\lambda E$ is then computed by parallel transport of $\cE^{\infty}_{u_i}$ to $\cE^{\infty}_{v_i}$ along $\alpha_i$ by the connection $D_{K,\lambda}$, for $i=0,\ldots, N$. But we proved above that the parallel transport isomorphism, for points in the same irreducible component, is independent of $\lambda$. By hypothesis, for $\lambda=1$ the parallel transport $\cE^{\infty}_{v_0}\to \cE^{\infty}_{v_0}$ around $\alpha$ is the identity. We conclude that $\lambda E$ is constant on $Z$. Arguing in this way for each fiber $p^{-1}(x_j), j=1,\ldots,l$, we obtain the claim.
\end{proof}

We now generalize Theorem \ref{mainres} to the setting of normal varieties.

\begin{thm}\label{thm_normal_abs_constr}
Let $X$ be a normal quasi-projective variety, $s:X\hookrightarrow Y$ a closed embedding in a smooth variety and $\phi: \iso(LocSys(X))\to \Z$ a function obtained as
$$\iso(LocSys(X))\xrightarrow{s_*} \iso(D^b_c(Y,\C))\xrightarrow{\psi} \Z,$$
where $\psi$ is a smooth composition of functors in ($\star)_K$. Then for all $k,n\in\Z_{\ge 0}$, the sets $\{L\in \bM_{\B}(X,n)(\C): \phi(L) = k\}$ are absolute $K$-constructible.
\end{thm}
\begin{proof}
The proof relies on a simple observation, already used in the proof of Proposition \ref{prop_embedding_betti_moduli}(1). Fix $p:\widetilde{X}\to X$ a resolution of singularities and consider the pullback $p^*: LocSys(X)\to LocSys(\widetilde{X})$. Then $p_*: LocSys(\widetilde{X})\to Sh_c(X)$ provides a left retract to $p^*$, in the sense that the composition $p_*\circ p^*: LocSys(X)\to LocSys(\widetilde{X})\to Sh_c(X)$ is the natural inclusion $LocSys(X)\to Sh_c(X)$, where $Sh_c(X)$ is the category of constructible sheaves on $X$. Indeed, since $p$ has connected fibers then, for all local systems $L$ on $X$, we have $p_*p^*L = L$. 

Write $s_*: LocSys(X)\to D^b_c(Y,\C)$ as the composition
$$LocSys(X)\xrightarrow{p^*} LocSys(\widetilde{X})\xrightarrow{\alpha} D^b_c(Y,\C)$$
where $\alpha$ is
$$LocSys(\widetilde{X})\hookrightarrow D^b_c(\widetilde X,\C) \xrightarrow{\cH^0(R(s_*\circ p_*))} D^b_c(Y,\C).$$
Then, the set  $\widetilde{S}_{n,k}:=\{ E \in \bM_{\B}(\widetilde{X},n)(\C) : \psi\circ\alpha(E) = k\}$ is absolute $K$-constructible by Theorem \ref{mainres}. The set $S_{n,k} := \{ L \in \bM_{\B}(X,n)(\C) : \phi(L) = k\}$ satisfies
$$i(S_{n,k}) = i(\bM_{\B}(X,n)(\C))\cap  \widetilde{S}_{n,k}$$
where $i:\bM_{\B}(X,n)\hookrightarrow \bM_{\B}(\widetilde{X},n)$ is the closed embedding, since $i$ is given exactly by pullback. Since $i(\bM_{\B}(X,n)(\C))$ is absolute $K$-constructible, $i(S_{n,k})$ is as well.
\end{proof}

\begin{cor}\label{cor_coh_jump_loci}
Let $X$ be a normal quasi-projective variety and $\cF\in D^b_c(X,\C)$. The cohomology jump loci $\Sigma_k^i(\cF)$ in rank one are a finite union of translated subtori. If there exists a closed embedding $s:X\to Y$ in a smooth variety such that $s_*\cF\in D^b_c(Y,\C)$ is an absolute $\Qbar$-point (in particular when $\cF$ is of geometric origin), the translation is by torsion points.
\end{cor}
\begin{proof}
Let $s:X\to Y$ be a closed embedding in a smooth variety $Y$ and apply the previous theorem to the map $rank\circ H^i \circ (\otimes s_*\cF) \circ s_*: \iso(LocSys(X))\to \Z$ to conclude that $\Sigma_k^i(\cF)$ is absolute $\C$-constructible (resp. absolute $\Qbar$-constructible) when $\cF\in D^b_c(X,\C)$ (resp. when $s_*\cF$ is an absolute $\Qbar$-point of $Y$). For a smooth variety, the absolute $\C$-constructible (resp. $\Qbar$-constructible) subsets in rank one belong to the Boolean algebra of translated (resp. torsion-translated) subtori by \cite[Theorem 9.1.2]{BW20}. The same is true when $X$ is normal by the embedding of Betti moduli spaces. Since the cohomology jump loci are closed, the corollary follows.
\end{proof}
\begin{rem}
The above arguments break down if we only assume that $X$ is an algebraic variety with $W_0H^1(X,\Q) = 0$ since a resolution of singularities will have disconnected fibers as soon as $X$ has multibranch singularities. Moreover, the condition on the weight does not imply that $\pi_1(X^{\reg})\to \pi_1(X)$ is surjective (equivalently, that $\pi_1(\widetilde{X})\to \pi_1(X)$ is surjective, where $\widetilde{X}$ is a resolution of singularities or just the normalization). Indeed, an example due to Koll\a'ar and Wang appearing in \cite[Example 4.6]{BR19}, exhibits a variety $X$ with $W_0H^1(X,\Q)=0$ which topologically is a fiber bundle over an elliptic curve $E$ with fiber a nodal cubic $C$. By the exact sequence of a fibration, we have that $\pi_1(X)$ is an extension of $\pi_1(E)$ by $\pi_1(C)\cong \Z$. The normalization $\hat{X}$ of $X$ is a $\bbP^1$-bundle over $E$ hence the projection to $E$ induces an isomorphism $\pi_1(\hat{X})\xrightarrow{\sim} \pi_1(E)$. Therefore $\pi_1(\hat{X})\to \pi_1(X)$ cannot be surjective.

However, by \cite[Section 7]{EK}, $W_0H^1(X,\Q) = 0$ is equivalent to the surjectivity of the map $\pi_1^{\mathrm{ab}}(\hat{X})\to \pi_1^{\mathrm{ab}}(X)$.
\end{rem}

%%%%%%%%%%%%% SECTION 3 

\section{Projective varieties with $W_0H^1=0$}
The aim of this section is to prove the following:
\begin{thm}\label{thmmulti}
Let $X$ be a projective variety such that $W_0H^1(X,\Q)=0$. Let $E$ be a semisimple local system on $X$ such that for one (equivalently, any) resolution of singularities $p:\widetilde{X}\to X$, $p^*E$ is semisimple. Then, for all $i,k\ge 0$, the varieties $\Sigma^i_k(E)$ are a finite union of translated subtori.
\end{thm}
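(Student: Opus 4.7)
The strategy is to reduce from the projective variety $X$ to its smooth projective ``pieces'' via a smooth simplicial resolution, then invoke the known translated-subtori property for smooth projective varieties with semisimple coefficients, with the weight condition $W_0 H^1(X,\Q)=0$ playing the role of the bridge that makes the reduction effective on $\bM_B(X,1)$.

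First, I would choose a smooth proper augmented hypercover $a_\bullet: X_\bullet \to X$ with each $X_p$ smooth projective; such a hypercover exists because $X$ is projective and resolution of singularities is available. By cohomological descent, for every rank-one local system $L$ on $X$ one has $H^i(X, L\otimes E) \cong \bbH^i(X_\bullet, a_\bullet^*(L\otimes E))$, and there is a convergent spectral sequence
\begin{equation*}
E_1^{p,q}(L) = H^q(X_p, f_p^* L \otimes a_p^* E) \Longrightarrow H^{p+q}(X, L\otimes E),
\end{equation*}
where $f_p: X_p \to X$ is the augmentation. The hypothesis on $E$ (independence of the semisimplicity of $p^*E$ from the chosen resolution) ensures that each $a_p^* E$ is a semisimple local system on the smooth projective $X_p$.

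On each smooth projective $X_p$, the jump loci $\Sigma^q_k(a_p^*E) \subset \bM_B(X_p,1)$ of a semisimple local system are finite unions of translated subtori, by the non-abelian Hodge theory of Corlette--Simpson combined with the Beauville--Simpson--Arapura structure theorem extended to the semisimple twist. Pulling back along the morphism of algebraic groups $f_p^*: \bM_B(X,1) \to \bM_B(X_p,1)$ sends translated subtori to translated subtori, so for each triple $(p,q,k)$ the locus $\{L \in \bM_B(X,1) : \dim_\C E_1^{p,q}(L) \geq k\}$ is a finite union of translated subtori in $\bM_B(X,1)$. The weight condition $W_0 H^1(X,\Q)=0$ enters here: via the construction of the mixed Hodge structure on $H^1(X)$ from the hypercover, it is equivalent to injectivity of $H^1(X,\Q) \to H^1(X_0,\Q)$, which dualizes to $f_0^*$ being a closed embedding on identity components of the rank-one Betti moduli spaces, rigidifying the discussion and controlling kernels.

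The spectral sequence differentials $d_r$ depend algebraically on $L$, and the loci where each $E_r^{p,q}(L)$ and each $d_r$ has constant rank form a finite constructible stratification of $\bM_B(X,1)$. On each stratum $\dim_\C H^i(X, L\otimes E)$ is constant, so $\Sigma^i_k(E)$ is a finite union of closures of strata. The main obstacle is precisely that, a priori, the higher differentials could carve out loci outside the translated-subtori algebra. To overcome it, I would replace the spectral sequence by a perfect complex of $\cO_{\bM_B(X,1)}$-modules universally computing $H^*(X, L\otimes E)$, obtained from a formal or semi-formal simplicial dga model in the spirit of Morgan and Navarro-Aznar, adapted to the semisimple twist using the $\C$VHS structure carried by each $a_p^* E$. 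The condition $W_0 H^1 = 0$ forces the degree-one part of this dga to be pure of weight one, which, combined with semisimplicity on each $X_p$, should give enough formality for the determinantal jump loci of the universal complex to be cut out by characters of $\bM_B(X,1)$, and hence to be translated subtori.
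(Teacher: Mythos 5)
Your general strategy—pass to a simplicial resolution by smooth projective varieties and exploit formality with semisimple coefficients—is the same skeleton as the paper's proof. But there is a genuine gap exactly where you flag the ``main obstacle,'' and your proposed fix does not close it.

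The missing ingredient is the reduction to a deformation-theoretic statement: the paper invokes a theorem of Budur--Rubi\'o (restated here as Theorem~\ref{brinfty}), which says that it suffices to exhibit a dgla pair $(A,M)$ quasi-isomorphic to $(\Omega_{\dR}(\End L_x),\Omega_{\dR}(E\otimes L_x))$ carrying a grading $W_\bullet$ that splits the (complexified) Deligne weight filtration on $H^\bullet(X,\C)$. Once that is done, one works with the associated $L_\infty$ pair $(H^\bullet A, H^\bullet M)$ and its resonance variety; the role of $W_0 H^1(X,\Q)=0$ is a finiteness statement there: since every element of $H^1 A$ has strictly positive weight and the weights of $H^\bullet M$ are bounded, the $L_\infty$ operations $m_{n+1}(\omega^{\otimes n},\cdot)$ vanish for $n \gg 0$, so the resonance variety is cut out by finitely many polynomials. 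Your characterization of the weight hypothesis as making $f_0^*$ a closed embedding on identity components is at best tangential and not how the hypothesis is actually used; moreover $W_0 H^1(X,\Q)=0$ does not imply $\pi_1(\widetilde X)\to\pi_1(X)$ is surjective (the Koll\'ar--Wang example in Remark 2.10 of the paper shows this), so one must be careful with any argument relying on an embedding of Betti spaces.

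The construction of the graded pair $(A,M)$ is where the simplicial machinery does real work, and your last paragraph is too speculative to stand as a proof. What makes the argument go through is: (i) at each level $X_k$ of a split simplicial resolution, $E_k\otimes L_k$ and $\End L_k$ are semisimple, so Simpson's formality applies and the inclusions $\ker D' \hookrightarrow \cE^\bullet$ and the quotient $\ker D'\twoheadrightarrow H_{D''}^\bullet$ are quasi-isomorphisms; (ii) crucially, these maps are compatible with \emph{all} the simplicial face/degeneracy pullbacks, because pullback along a holomorphic map of compact K\"ahler manifolds commutes with the harmonic-metric operators $D, D', D''$ by the functoriality of the nonabelian Hodge correspondence; (iii) the grading on the resulting cosimplicial dga pair is then simply by cohomological degree in each column, and applying Hain's de Rham functor $\bD$ one obtains a genuine dga pair with a multiplicative grading whose associated filtration on $H^\bullet(X,\C)$ is, by Deligne's construction of the mixed Hodge structure via the simplicial resolution, exactly the weight filtration. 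Knowing that the jump loci of each $E_1^{p,q}$ are translated subtori and that the differentials vary algebraically is not enough by itself; the determinantal loci of a spectral sequence can be arbitrarily bad unless you have structural control on the differentials, and that control is precisely what the graded dgla pair and Theorem~\ref{brinfty} provide. You would need to supply an actual argument in place of ``should give enough formality.''
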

The strategy of the proof follows the same idea of the proof of \cite[Theorem 1.5]{BR19}, which reduces the problem to the construction of an additional grading on a certain dg Lie algebra and on a dg module over it. To achieve this, we will use simplicial techniques, drawing heavily from constructions by R. Hain in \cite[\S 5]{H87}. The key geometrical input is the formality of the $C^\infty$ de Rham complex with coefficients in a semisimple local system over a compact K\"ahler manifold, due to C. Simpson in \cite[\S 2, Cohomology]{S92}.

\subsection{Preliminaries on simplicial de Rham theory}
The main references for this section are R. Hain's article \cite{H87} and S. Halperin's lecture notes \cite{H83}.
\begin{defi}\label{locsimp}
Let $K$ be a simplicial set. A local system $L$ on $K$ with values in a category $\cC$ is the data of objects $L_\sigma\in\ob(\cC)$, for all simplices $\sigma\in K$, and of maps $\partial_i: L_\sigma\to L_{\partial_i\sigma}$, $s_i: L_\sigma\to L_{s_i\sigma}$ satisfying the simplicial relations of face maps and degeneracy maps respectively, see \cite[12.15]{H83}.
\end{defi}

Given a simplicial set $K$ and a local system $L$ of $\C$-vector spaces on $K$, we denote by $A^\bullet(K,L)$ (resp. $A_\infty^\bullet(K,L)$) the dg vector space of complex polynomial (resp. $C^\infty$) differential forms on $K$ with values in $L$ \cite[Definition 13.10]{H83}. If $L$ is a local system of $\C$-algebras then $A^\bullet(K,L)$ is naturally a dga. If $X$ is a topological space we denote by $\Simp_\bullet  X$ the simplicial set of singular simplices $\{\Delta^n\to X, n\ge 0\}$. Given a  local system $L$ of $\C$-vector spaces on $X$, it induces via pullback a local system $L_\bullet$ on $\Simp_\bullet  X$, in the sense of Definition \ref{locsimp}. We denote by $\Omega_{\dR}(X,L):=A^\bullet(\Simp_\bullet X, L_\bullet)$ the dg-vector space of complex polynomial differential forms on $\Simp_\bullet X$. Lastly, if $X_\bullet$ is a simplicial topological space and $L_\bullet$ is a $\C$-local system on $X_\bullet$, we denote by $A^\bullet(X_\bullet, L_\bullet)$ the cosimplicial dg vector space where the $n$-th term is given by $A^\bullet(\Simp_\bullet X_n, L_n)$ and the maps between them are pullbacks of the simplicial maps of $X_\bullet$.

\begin{defi}(\cite[5.2]{H87})
(1) Let $C$ be a cosimplicial $\C$-vector space. Denote by $E^n(\Delta^k)$ the vector space of complex polynomial $n$-forms on the simplex $\Delta^k$. The {\em de Rham complex} of $C$, denoted by $\bD C$, is the dg vector space whose $n$-th piece, for $n\ge 0$, is given by the set of elements $(w_k)\in\prod_{k\ge 0} E^n(\Delta^k)\otimes_\C C_k$ such that, for all nondecreasing maps $f:[k]\to [j]$ we have 
$$(f^*\otimes\id)w_j = (\id\otimes f_*)w_k\in E^n(\Delta^k)\otimes_\C C_j.$$
The differential $d: \bD^nC \to \bD^{n+1}C$ is given by $d\otimes \id$. We also define $\bS C$ to be the complex of vector spaces with same terms as $C$ and differential given by the alternating sum of the cosimplicial morphisms.

(2) Let $C^\bullet$ be a cosimplicial complex of $\C$-vector spaces. Define the {\em de Rham complex} $\bD C^\bullet$ of $C^\bullet$ to be the total complex of the double complex with $(i,j)$-piece given by $\bD^i C^j$, with horizontal differential given by the de Rham differential $\bD^i C^j\to \bD^{i+1} C^j$ and vertical differential $\bD^i C^j\to \bD^{i} C^{j+1}$ induced by that of the complex $C^\bullet$, for all $i,j\ge 0$. We also denote by $\bS C^\bullet$ the simple complex associated to the double complex with $(i,j)$-piece given by $\bS^i C^j$, with horizontal differentials those induced by $\bS$, and vertical differentials equal to the ones of $C^\bullet$.
\end{defi}

\begin{rem}
(1) We gave the above definitions in the context of cosimplicial $\C$-vector spaces because this will be enough for our purposes. In \cite[5.2]{H87}, the definition deals more generally with cosimplicial abelian groups and $\Q$-differential forms.  

(2) The de Rham functor is not exact in general, but it is exact on the subcategory of split cosimplicial abelian groups over simplicial sets, see \cite[Definition 5.3, Lemma 5.3.4]{H87}. All the cosimplicial abelian groups that we shall consider will be of that form and this will be essential in the proofs.
\end{rem}
\begin{lem}\label{simp1}
Let $X_\bullet$ be a split simplicial space with an augmentation $\epsilon:X_\bullet\to X$ which is of cohomological descent. Let $L$ be a local system of $\,\C$-vector spaces (resp. $\C$-algebras) and $L_\bullet$ the pullback simplicial local system on $X_\bullet$. There is a natural dg (resp. dga) quasi-isomorphism $\Omega_{\dR}(X, L)\to\bD A^\bullet(X_\bullet, L_\bullet)$.
\end{lem}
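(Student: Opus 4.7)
The plan is to construct the map by pullback of polynomial forms along the augmentation and then verify it is a quasi-isomorphism via a spectral sequence argument combining cohomological descent with exactness of $\bD$ on split cosimplicial objects, exactly in the spirit of \cite[\S 5]{H87} but with twisted coefficients.

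First I would build the map. For each $n$, the augmentation component $\epsilon_n : X_n \to X$ induces a map of singular simplicial sets $\Simp_\bullet X_n \to \Simp_\bullet X$ under which $L_n$ is identified with the pullback of $L_\bullet$. Pullback of polynomial forms then yields dg (resp. dga) maps $\epsilon_n^* : \Omega_{\dR}(X,L) \to A^\bullet(\Simp_\bullet X_n, L_n)$ that are strictly compatible with all face and degeneracy maps, i.e.\ a map of cosimplicial dg vector spaces from the \emph{constant} cosimplicial object $\Omega_{\dR}(X,L)$ into $A^\bullet(X_\bullet, L_\bullet)$. Applying the functor $\bD$ and using the elementary fact that for a constant cosimplicial dg vector space $V$ the natural inclusion $V \hookrightarrow \bD V$ as constant families of forms is a quasi-isomorphism, one obtains the desired natural map $\Omega_{\dR}(X,L) \to \bD A^\bullet(X_\bullet, L_\bullet)$.

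To see this map is a quasi-isomorphism I would filter the double complex underlying $\bD A^\bullet(X_\bullet, L_\bullet)$ by cosimplicial degree. Since $X_\bullet$ is split, the cosimplicial dg vector space $A^\bullet(X_\bullet, L_\bullet)$ inherits the splittings, so it is split in the sense of \cite[Def.~5.3]{H87}; consequently \cite[Lemma~5.3.4]{H87} applies and the natural transformation $\bD \to \bS$ (integration over the simplex) is a quasi-isomorphism of functors on this category. The total complex $\bS A^\bullet(X_\bullet, L_\bullet)$ is the standard complex computing the hypercohomology of $L_\bullet$ on $X_\bullet$, which by the cohomological descent hypothesis on $\epsilon$ agrees with $H^*(X,L) = H^*(\Omega_{\dR}(X,L))$. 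An edge-map comparison on the spectral sequences identifies our map with this isomorphism on cohomology.

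The multiplicative upgrade in the algebra case is automatic: wedge product of $L$-valued forms makes every pullback $\epsilon_n^*$ a dga map, and the shuffle product on $\bD$ turns the constant-family inclusion $V \hookrightarrow \bD V$ into a dga map, so the whole construction promotes to a dga quasi-isomorphism. The main technical obstacle in this plan is verifying that the twisted de Rham complex of a split simplicial space is still split in Hain's sense; once that is granted, the argument of \cite[\S 5]{H87} in the trivial-coefficient setting transports essentially verbatim.
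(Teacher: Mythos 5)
Your proposal is correct and follows essentially the same route as the paper: build the comparison map by pulling back polynomial forms along the augmentation, then reduce to the $\bD\to\bS$ comparison via Hain's splitness lemma and invoke cohomological descent to conclude. The paper organizes this as a commutative square linking $\bD$ and $\bS$ applied to the constant and simplicial cosimplicial objects, which is the diagrammatic version of your filtration argument.
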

\begin{proof}

Consider the constant cosimplicial dg vector space $A^\bullet(\Simp_\bullet X,L)^{\mathrm{const}}$, where the $n$-th term is $A^\bullet(\Simp_\bullet X,L)$, for all $n\ge 0$. There is a natural pullback of cosimplicial dg vector spaces $A^\bullet(\Simp_\bullet X,L)^{\mathrm{const}}\to A^\bullet(X_\bullet, L_\bullet)$. Consider the following commutative square:
$$
\begin{tikzcd}
\bD A^\bullet(\Simp_\bullet X, L)^{\mathrm{const}}\arrow[r]\arrow[d] & \bD A^\bullet(X_\bullet, L_\bullet) \arrow[d]\\
\bS A^\bullet(\Simp_\bullet X, L)^{\mathrm{const}}\arrow[r] & \bS A^\bullet(X_\bullet, L_\bullet)
\end{tikzcd}
$$
The cosimplicial dg vector space $A^\bullet(\Simp_\bullet X, L)^{\mathrm{const}}$ (resp. $A^\bullet(X_\bullet, L_\bullet)$) is split over the constant simplicial set (resp. over the simplicial set $\pi_0(X_\bullet)$). Therefore \cite[Lemma 5.3.8]{H87} implies that the vertical maps are quasi-isomorphisms. The two dg vector spaces at the bottom are representatives of $R\Gamma(L)$ and $R\Gamma(X_\bullet, L_\bullet)$ respectively, and the map between them is induced by the adjunction $L\to \epsilon_*\epsilon^*L$. By the cohomological descent hypothesis, the bottom arrow is then a quasi-isomorphism. We conclude the proof by remarking that the top left object is quasi-isomorphic to $A^\bullet(\Simp_\bullet X, L)=\Omega_{\dR}(X, L)$ by integration of differential forms.
\end{proof}
\begin{lem}\label{simp2}
Let $X_\bullet$ be a split simplicial manifold and $L_\bullet$ a local system of $\C$-vector spaces (resp. $\C$-algebras) on $X_\bullet$. Let $\cE^\bullet(X_k,L_k)$ be the usual de Rham complex of smooth forms with values in $L_k$, and $\cE^\bullet(X_\bullet,L_\bullet)$ the resulting cosimplicial dg vector space (resp. dga). Then there is a natural dg (resp. dga) quasi-isomorphism between $\bD A^\bullet(X_\bullet, L_\bullet)$ and $\bD \cE^\bullet(X_\bullet, L_\bullet)$.
\end{lem}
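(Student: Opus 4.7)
The plan is to compare $\bD A^\bullet(X_\bullet, L_\bullet)$ and $\bD \cE^\bullet(X_\bullet, L_\bullet)$ via an intermediate cosimplicial object built from $C^\infty$ forms on smooth singular simplices, and then show that this comparison survives the functor $\bD$ using the splittability hypothesis on $X_\bullet$.

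For each $n \geq 0$, let $A_\infty^\bullet(\Simp_\bullet X_n, L_n)$ denote the dg vector space (resp. dga) of $C^\infty$-polynomial forms on singular simplices of $X_n$ valued in $L_n$, and let $\cE_\infty^\bullet(\Simp_\bullet^\infty X_n, L_n)$ denote the analogous space of $C^\infty$-forms on \emph{smooth} singular simplices. Classical rational homotopy theory (Halperin, \cite[Ch. 14--16]{H83}) together with the smooth de Rham theorem gives, for each $n$, a natural zigzag of dg (resp. dga) quasi-isomorphisms
$$A^\bullet(\Simp_\bullet X_n, L_n) \hookrightarrow A_\infty^\bullet(\Simp_\bullet X_n, L_n) \longrightarrow \cE_\infty^\bullet(\Simp_\bullet^\infty X_n, L_n) \longleftarrow \cE^\bullet(X_n, L_n),$$
where the second arrow restricts forms to smooth simplices and the third sends a smooth form on $X_n$ to its family of pullbacks along smooth singular simplices. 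The key point for the algebra case is that all three maps are strict dga morphisms: pullback of forms is multiplicative, so the usual obstruction coming from non-multiplicativity of integration does not arise here.

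These constructions are functorial with respect to the face and degeneracy maps of $X_\bullet$, so they upgrade to morphisms of cosimplicial dg vector spaces (resp. dgas). Applying $\bD$ levelwise yields a corresponding zigzag of double complexes, and the goal is to check that each induced map is a quasi-isomorphism on totalizations. Using the spectral sequence of the double complex $K^{i,j} = \bD^i(C^{\bullet, j})$ obtained by first taking cohomology in the complex ($j$) direction, one has $E_1^{i,j} = \bD^i(H^j(C^\bullet))$ provided $\bD$ is exact on the relevant cosimplicial vector spaces. Since $X_\bullet$ is split, each of the cosimplicial vector spaces $C^{\bullet,j}$ appearing in our zigzag is split over $\pi_0(X_\bullet)$ in the sense of \cite[Definition 5.3]{H87}, and the required exactness follows from \cite[Lemma 5.3.4]{H87}. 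A levelwise quasi-isomorphism induces an isomorphism of cosimplicial vector spaces on each $H^j$, hence an isomorphism on $E_1$; as the spectral sequence lives in the first quadrant it converges, giving the desired quasi-isomorphism of totalizations.

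The main obstacle is verifying that each of the cosimplicial vector spaces $A^\bullet(X_\bullet, L_\bullet)^j$, $A_\infty^\bullet(X_\bullet, L_\bullet)^j$, $\cE_\infty^\bullet(X_\bullet, L_\bullet)^j$, and $\cE^\bullet(X_\bullet, L_\bullet)^j$ is indeed split over $\pi_0(X_\bullet)$. This amounts to checking that the chosen decomposition of the split simplicial manifold $X_\bullet$ into non-degenerate and degenerate parts is compatible with the various contravariant forms-functors, and that the zigzag above respects these splittings. This is a routine but careful technical verification within the framework of \cite[\S 5]{H87}; once in place, the spectral sequence argument concludes the proof.
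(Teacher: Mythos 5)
Your argument is correct and essentially the same as the paper's: both pass through levelwise quasi-isomorphisms of cosimplicial dga's between the polynomial and $C^\infty$ de Rham models, then apply $\bD$ and use splitness over $\pi_0(X_\bullet)$ to conclude. The paper invokes \cite[5.3.7]{H87} directly where you re-derive its content via the double-complex spectral sequence and exactness of $\bD$ on split objects (\cite[Lemma 5.3.4]{H87}), and it uses a three-term zigzag $A^\bullet(\Simp_\bullet M,L)\to A_\infty^\bullet(\Simp_\bullet M,L)\leftarrow\cE^\bullet(M,L)$ (via \cite[15.19]{H83}) where you insert a fourth intermediate model on smooth singular simplices---minor cosmetic differences, not a different approach.
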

\begin{proof}
The argument of \cite[15.19]{H83} applies to twisted coefficients and shows that if $M$ is a manifold and $L$ is a local system on $M$ then there are functorial quasi-isomorphisms of dg vector spaces $A^\bullet(\Simp_\bullet M,L)\to A_\infty^\bullet(\Simp_\bullet M,L)\leftarrow\cE^\bullet(M,L)$. This implies that there are quasi-isomorphisms of cosimplicial dg vector spaces $A^\bullet(X_\bullet,L_\bullet)\to A_\infty^\bullet(X_\bullet,L_\bullet)\leftarrow\cE^\bullet(X_\bullet,L_\bullet)$. Moreover, since $X_\bullet$ is a split simplicial space, it follows by functoriality of $A^\bullet(-),A_\infty(-),\cE^\bullet(-)$ that all three terms are split over the simplicial set $\pi_0(X_\bullet)$. We then apply the functor $\bD$ and use \cite[5.3.7]{H87} to conclude. When $L_\bullet$ is a simplicial local system of $\C$-algebras, the quasi-isomorphisms are of dga's.
\end{proof}

The proof of Theorem \ref{thmmulti} will be devoted to the construction of a certain dga {\em pair}, namely a couple $(A,M)$ where $A$ is a dga and $M$ is a dg $A$-module. For the basic formalism of dgla-pairs see \cite[\S 3]{BW15}, the formalism of dga-pairs being essentially the same. If $A$ (resp. $A'$) is a cosimplicial dga and $M$ (resp. $M'$) is a cosimplicial dg module over $A$ (resp. $A'$), a morphism of cosimplicial pairs $\phi: (A,M)\to (A',M')$ is the datum of a morphism $\phi_1: A\to A'$ of cosimplicial dga's and a morphism $\phi_2: M\to M'$ of cosimplicial dg vector spaces which is also a morphism of cosimplicial $A$-modules, where $M'$ has the structure of $A$-module induced by $A\to A'$. We say that $\phi$ is a quasi-isomorphism if both $\phi_1$ and $\phi_2$ are quasi-isomorphisms.

\begin{lem}\label{pairs0}
Let $(A,M)$ be a dga pair. Then $A\oplus M$ has a natural structure of dga, functorial in the pair $(A,M)$. Moreover, a morphism $(A,M)\to (A',M')$ is a quasi-isomorphism if and only if $A\oplus M\to A'\oplus M'$ is quasi-isomorphism of dga's.
\end{lem}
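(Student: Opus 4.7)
The plan is to equip $A\oplus M$ with the standard square-zero extension dga structure: grade it by $(A\oplus M)^n = A^n\oplus M^n$, set the differential to be $d(a,m) = (d_A a, d_M m)$ componentwise, and define the product using the $A$-bimodule structure on $M$ by
$$ (a,m)\cdot(a',m') := (aa',\, a\cdot m' + (-1)^{|m|\,|a'|}\, m\cdot a'),$$
so that $M\cdot M = 0$. First I would verify the dga axioms: graded associativity reduces to associativity in $A$ together with the bimodule axiom $a(ma') = (am)a'$ and the vanishing of $M\cdot M$; the graded Leibniz rule reduces to the Leibniz rule on $A$ together with the compatibility $d_M(a\cdot m) = (d_A a)\cdot m + (-1)^{|a|} a\cdot (d_M m)$ of $d_M$ with the module action. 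Functoriality in $(A,M)$ is then immediate: a pair morphism $(\phi_1,\phi_2)\colon (A,M)\to (A',M')$ induces the map $(a,m)\mapsto (\phi_1(a),\phi_2(m))$, and the fact that it is a dga morphism is exactly the statement that $\phi_1$ is a dga morphism, $\phi_2$ is a chain map, and $\phi_2$ is $A$-linear (with $M'$ regarded as an $A$-module via $\phi_1$).

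For the equivalence of quasi-isomorphisms, I would observe that there is a short exact sequence of complexes $0 \to M \to A\oplus M \to A \to 0$, split (as complexes) by the dg inclusion $A\hookrightarrow A\oplus M$. Taking cohomology gives a canonical, functorial identification $H^\bullet(A\oplus M)\cong H^\bullet(A)\oplus H^\bullet(M)$. Under this identification, the map induced by $(\phi_1,\phi_2)$ becomes $H^\bullet(\phi_1)\oplus H^\bullet(\phi_2)$, which is an isomorphism if and only if each of $H^\bullet(\phi_1)$ and $H^\bullet(\phi_2)$ is an isomorphism, i.e. if and only if $(\phi_1,\phi_2)$ is a quasi-isomorphism of pairs.

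There is no real obstacle here: the lemma is a formal verification, and its only content is to package a dg $A$-module into a dga so that the subsequent constructions (such as Sullivan-type models, deformation functors, or the absolute-set arguments in this section) can be carried out using standard dga techniques rather than dg pair techniques. The only point requiring mild attention is the sign in the product, which I would fix once and for all by the Koszul convention so that the Leibniz rule holds without further adjustments.
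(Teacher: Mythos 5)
Your proposal is correct and takes essentially the same route as the paper's proof: the paper defines the same square-zero (trivial) extension multiplication $(a_1,m_1)(a_2,m_2)=(a_1a_2,\,a_1m_2+a_2m_1)$ on $A\oplus M$ and defers the verification and the quasi-isomorphism equivalence to \cite{L04} and \cite[Theorem 2.5]{BR19}, which argue exactly as you do via the splitting $H^\bullet(A\oplus M)\cong H^\bullet(A)\oplus H^\bullet(M)$. The only difference is that you spell out the Koszul sign and the axiom checks that the paper leaves to the citations; your sign convention is compatible with the paper's (unsigned) formula once one interprets $a_2m_1$ through the induced right action $m\cdot a:=(-1)^{|m||a|}a\cdot m$.
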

\begin{proof}
The construction is well-known and appears in the context of Lie algebras and $L_\infty$-algebras in \cite{L04} and \cite[Theorem 2.5]{BR19}. The differential on $A\oplus M$ is given by $d(a,m)=(da,dm)$ and multiplication is given by the rule $(a_1,m_1)(a_2,m_2)=(a_1a_2, a_1m_2+a_2m_1)$. The proofs in {\em loc. cit.} translate easily in this situation.
\end{proof}

\begin{lem}\label{pairs}
Let $A$ (resp. $A'$) be a cosimplicial dga, $M$ (resp. $M'$) a cosimplicial dg-module over $A$ (resp. $A'$). Assume that $A,A',M,M'$ are split cosimplicial dg-vector space over a simplicial set. Let $\phi: (A,M)\to (A',M')$ be a quasi-isomorphism of cosimplicial dga-pairs. Then the dga-pair $(\bD A,\bD M)$ is quasi-isomorphic to $(\bD A', \bD M')$.
\end{lem}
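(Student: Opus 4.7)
The plan is to reduce the statement about dga-pairs to the known fact that the de Rham functor $\mathbf{D}$ preserves quasi-isomorphisms of split cosimplicial dg vector spaces over a simplicial set, as in \cite[Lemma 5.3.7]{H87}. The bridge is Lemma \ref{pairs0}, which encodes a dga-pair $(A,M)$ as a single dga $A\oplus M$ via the semi-direct product structure $d(a,m)=(da,dm)$ and $(a_1,m_1)(a_2,m_2)=(a_1a_2,\,a_1m_2+a_2m_1)$.

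First, I would apply Lemma \ref{pairs0} cosimplicial-degree by cosimplicial-degree to produce cosimplicial dga's $A\oplus M$ and $A'\oplus M'$; the construction is functorial, so the pair morphism $\phi=(\phi_1,\phi_2)$ assembles into a morphism of cosimplicial dga's $\Phi:A\oplus M\to A'\oplus M'$. Moreover, since $\phi_1$ and $\phi_2$ are quasi-isomorphisms at every cosimplicial level, the induced map $\Phi$ is a quasi-isomorphism of cosimplicial dga's in the cosimplicial sense (level-wise quasi-iso).

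Next, I would check that splitness is preserved under $\oplus$. Since $A$ and $M$ are assumed split as cosimplicial dg vector spaces over a common simplicial set (and similarly for $A'$ and $M'$), their direct sum $A\oplus M$ inherits a splitting over the same simplicial set: splittings can be taken in each cosimplicial degree componentwise, and the simplicial decomposition data is compatible with direct sums. This step is essentially formal but must be verified in the technical framework of \cite[Definition 5.3]{H87}.

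Having established splitness, I would invoke \cite[Lemma 5.3.7]{H87} to conclude that $\mathbf{D}\Phi:\mathbf{D}(A\oplus M)\to \mathbf{D}(A'\oplus M')$ is a quasi-isomorphism of dga's. The last step is to observe that $\mathbf{D}$ commutes with direct sums, so there are natural identifications $\mathbf{D}(A\oplus M)\cong \mathbf{D}A\oplus \mathbf{D}M$ and $\mathbf{D}(A'\oplus M')\cong \mathbf{D}A'\oplus \mathbf{D}M'$ as dga's (this is immediate from the explicit description of $\mathbf{D}$ since polynomial forms on a simplex with values in a direct sum of coefficient systems split as the direct sum of forms). Applying Lemma \ref{pairs0} in the reverse direction then yields that $(\mathbf{D}A,\mathbf{D}M)\to (\mathbf{D}A',\mathbf{D}M')$ is a quasi-isomorphism of dga-pairs. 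The only delicate point I expect is the verification that the cosimplicial splitting on $A\oplus M$ fits the precise technical definition used to invoke \cite[Lemma 5.3.7]{H87}; everything else is a routine assembly of the functorial construction of Lemma \ref{pairs0} with the exactness properties of $\mathbf{D}$ on split cosimplicial objects.
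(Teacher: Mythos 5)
Your proposal is correct and follows essentially the same route as the paper: encode each pair as a single dga via Lemma \ref{pairs0}, apply the exactness of $\bD$ on split cosimplicial objects (\cite[5.3.7]{H87}), identify $\bD(A\oplus M)$ with $\bD A\oplus\bD M$, and use the ``if and only if'' clause of Lemma \ref{pairs0} to read off the quasi-isomorphism of de Rham pairs. The paper leaves the preservation of splitness under $\oplus$ implicit, whereas you flag it explicitly; that is the only (minor) difference.
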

\begin{proof}
We use Lemma \ref{pairs0} to endow $A\oplus M$ and $A'\oplus M'$ with the structure of cosimplicial dga's. Then $\phi$ induces a quasi-isomorphism of cosimplicial dga's, and by \cite[5.3.7]{H87} we have a quasi-isomorphism $\bD(A\oplus M)\to \bD(A'\oplus M')$. One checks that the dga structure given by Lemma \ref{pairs0} on $\bD A\oplus \bD M$ (resp. $\bD A'\oplus \bD M'$) makes it isomorphic to $\bD(A\oplus M)$ (resp. $\bD(A'\oplus M')$). Again by {\em loc. cit.} one deduces that the pair $(\bD A,\bD M)$ is quasi-isomorphic to $(\bD A', \bD M')$.
\end{proof}
\medskip
\subsection{Proof of Theorem \ref{thmmulti}}
$$$$
Let $(A,M)$ be a dga (or dgla) pair. A {\em grading} $W_\bullet$ of $(A,M)$ is a grading of vector spaces $A^n =\oplus_{j\ge 0} W_jA^n, M^n =\oplus_{j\ge 0} W_jM^n$, for all $n\ge 0$, and such that:
\begin{enumerate}
\item $d(W_iA^n)\subset W_iA^{n+1}$ and $W_iA^n\cdot W_jA^m \subset W_{i+j}A^{n+m}$, for all $i,j\ge 0$, $n,m\ge 0$,
\item $d(W_iM^n)\subset W_iM^{n+1}$ and $W_iA^n\cdot W_jM^m \subset W_{i+j}M^{n+m}$, for all $i,j\ge 0$, $n,m\ge 0$.
\end{enumerate}
A grading $W_\bullet$ on $(A,M)$ induces a grading on the cohomology $H^\bullet A$. The following result is essentially proved in \cite{BR19}. Since it is not stated like this in {\em loc. cit.}, we will give a sketch of the proof at the end of the section.
\begin{thm}[\cite{BR19}]\label{brinfty}
Let $X$ be a complex algebraic variety with $W_0H^1(X,\Q)=0$ and let $E$ be a local system on $X$. Fix integers $i,k\ge 0$ and a point $x\in\Sigma^i_k(E)$. We denote by $L_x$ the rank one local system corresponding to $x$. Assume that the dgla pair $(\Omega_{\dR}(\End L_x),\Omega_{\dR}(E\otimes L_x))$ is quasi-isomorphic to a dgla pair $(A,M)$ with a grading $W_\bullet$ such that $W_\bullet$ induces a splitting of the complexified Deligne weight filtration on $H^\bullet A = H^\bullet(X,\C)$. Then the irreducible components of $\Sigma^i_k(E)$ passing through $x$ are translated subtori.
\end{thm}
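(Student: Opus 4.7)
The plan follows the standard deformation-theoretic strategy (cf. \cite{BW15,BR19}): transfer the problem to the formal completion of the jump scheme, use the grading to produce a $\mathbb{G}_m$-action on this germ, and combine positivity of weights with the rational structure on $H^1(X,\C)$ to linearize the germ into formal subtori.

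The starting point is that the formal completion at the identity of $\Sigma^i_k(E)\cdot L_x^{-1}\subset \bM_B(X,1)$ is pro-represented by the Maurer--Cartan deformation functor of the dgla pair $(\Omega_{\dR}(\End L_x),\Omega_{\dR}(E\otimes L_x))$, with the jump subfunctor cut out by the drop-in-rank condition on $H^i$. A quasi-isomorphism of dgla pairs induces an isomorphism of these pro-representing formal schemes, preserving the jump subfunctors. Replacing the original pair with $(A,M)$, it suffices to show that each irreducible component of the formal germ of the jump scheme attached to $(A,M)$ at the origin is the formal germ of a subtorus of $\bM_B(X,1)^0$.

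Next, the non-negative grading $W_\bullet$ induces a $\mathbb{G}_m$-action on both the Maurer--Cartan functor and the jump subfunctor, hence on the formal germ of the jump locus. Its induced action on the Zariski tangent space $H^1A\cong H^1(X,\C)$ is the grading on cohomology inherited from $W_\bullet$; by hypothesis this splits the complexified Deligne weight filtration. Since $W_0H^1(X,\Q)=0$, all weights appearing on $H^1(X,\C)$ are strictly positive, and the splitting is compatible with the $\Q$-structure underlying the character lattice of $\bM_B(X,1)^0$. One then invokes the central linearization principle (essentially \cite[Theorem 1.5]{BR19}): a formal subscheme of $\widehat{H^1(X,\C)}_0\cong\widehat{\bM_B(X,1)^0}_{\mathrm{id}}$ which is $\mathbb{G}_m$-invariant under an action with strictly positive, $\Q$-rational weights is the formal germ of a finite union of algebraic subtori. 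Translating back by multiplication by $L_x$, and using that $\Sigma^i_k(E)$ is Zariski-closed in $\bM_B(X,1)$, each global irreducible component through $x$ must coincide with such a translated subtorus $L_x\cdot T$.

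The main obstacle is the linearization step. One must show that under the positive-weight $\mathbb{G}_m$-action the defining ideal of the formal germ is generated by $\mathbb{G}_m$-semi-invariants, which by positivity of weights are polynomials, and that the $\Q$-rationality of the splitting together with the integrality of the character lattice forces the resulting linear subspaces of $H^1(X,\C)$ to come from saturated sublattices of $H^1(X,\Z)$, hence to exponentiate to subtori. The hypothesis $W_0H^1=0$ is essential here: a zero weight would allow $\mathbb{G}_m$-semi-invariants with nontrivial power-series tails, breaking both the algebraization and the subsequent identification of irreducible components with subtori.
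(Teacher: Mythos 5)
Your overall skeleton---pro-representability of the germ of $\Sigma^i_k(E)$ at $x$ by the deformation functor of the dgla pair, invariance under quasi-isomorphism, and exploitation of the strictly positive weights on $H^1$---is in the spirit of the paper's argument, but the step you call the ``central linearization principle'' is not a theorem, and it is exactly where the proof breaks. A formal subscheme of $\widehat{H^1(X,\C)}_0$ invariant under a $\mathbb{G}_m$-action with strictly positive weights is only the completion of a weighted-homogeneous algebraic \emph{cone}: its ideal is generated by weighted-homogeneous polynomials, which need not be linear. For instance $\{y=x^2\}$ with weights $(1,2)$, or the quadric cone $\{xy=z^2\}$ with all weights equal to $1$, are irreducible, $\mathbb{G}_m$-invariant with positive rational weights, are not unions of linear subspaces, and their images under $\exp$ are not germs of subtori. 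Positivity of weights buys you algebraization of the germ upstairs and nothing more. The missing ingredient is the second half of the paper's argument: the germ is \emph{also} algebraic downstairs, because $\Sigma^i_k(E)$ is a closed algebraic subvariety of $\bM_B(X,1)$, and one must then invoke the bialgebraicity statement for tori \cite[Corollary 2.2]{BW17}: a germ at $0\in\C^n$ of an algebraic variety carried by $\exp$ to the germ at $1$ of an algebraic subvariety of $(\C^\times)^n$ is the germ of a finite union of translated subtori. In the paper the algebraicity upstairs is obtained concretely: one transfers to an $L_\infty$-structure on $(H^\bullet A,H^\bullet M)$ compatible with $W$, identifies the germ with the completed local ring at $0$ of the resonance variety $\cR^i_k(H^\bullet A,H^\bullet M)\subset H^1A$, and uses positivity of weights together with finite-dimensionality of $H^\bullet M$ to show that $d_\omega=\sum_{n}\tfrac{1}{n!}m_{n+1}(\omega^{\otimes n},\cdot)$ truncates to a polynomial in $\omega$. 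You mention the Zariski-closedness of $\Sigma^i_k(E)$ only to globalize at the end, whereas it is an essential input to the bialgebraicity step.

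A second gap is your appeal to ``$\Q$-rationality of the splitting'' to force the linear pieces to come from saturated sublattices of $H^1(X,\Z)$. The hypothesis of the theorem only provides a splitting of the \emph{complexified} Deligne weight filtration on $H^\bullet(X,\C)$ (and the splitting actually constructed in the paper, from a simplicial resolution, is a priori only defined over $\C$), so no rational structure on the grading is available; rationality of the tangent spaces of the components is an \emph{output} of the bialgebraicity theorem, not an input you can extract from the grading. Finally, note that even the equivariant algebraization you sketch implicitly requires a $\mathbb{G}_m$-equivariant pro-representation of the jump functor as a quotient of the completed symmetric algebra on $(H^1A)^\vee$; in the paper this is precisely what the $W$-compatible $L_\infty$ minimal model of the pair, via \cite[Theorem 1.6, Corollary 2.20]{BR19}, supplies.
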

Let $L$ be a local system of rank one such that $L\in \Sigma^i_k(E)$. The rest of the proof will be devoted to the construction of a pair $(A,M)$ quasi-isomorphic to $(\Omega_{\dR}(\End L),\Omega_{\dR}(E\otimes L))$ and endowed with a grading $W_\bullet$ as in Theorem \ref{brinfty}. 

Let $X_\bullet$ be a split simplicial resolution of $X$, where each $X_k$ is a smooth projective scheme and $X_0$ a resolution of singularities of $X$. We denote by $\epsilon: X_\bullet\to X$ the augmentation. Let $E_n := \epsilon_n^*E$ be the pullback of $E$ to $X_n$. By hypothesis $E_0$ is semisimple. Moreover, $E_n$ is the pullback of $E_0$ along any of the simplicial maps between $X_n$ and $X_0$. Simpson's theory implies that the pullback of a semisimple local system along a map of compact K\"ahler manifolds remains semisimple: indeed a harmonic metric on the local system pulls back to a harmonic metric. We deduce that $E_n$ is semisimple, for all $n\ge 0$.

Write $L_n:=\epsilon_n^*L$, $n\ge 0$. By Lemma \ref{simp1} and \ref{simp2}, the dga pair $(\Omega_{\dR}(\End L),\Omega_{\dR}(E\otimes L))$ is quasi-isomorphic to the dga pair $(\bD \cE^\bullet (\End L_\bullet), \bD \cE^\bullet (E_\bullet\otimes L_\bullet))$. We will construct $(A,M)$ and the grading $W_\bullet$ from the latter. The idea is an adaptation to the simplicial context of the strategy employed in \cite[\S 6, Main theorem, 1st proof]{DGMS} and \cite[Lemma 2.2]{S92}. We also remark that we will actually construct a dga-pair $(A,M)$ quasi-isomorphic to the dga-pair $(\Omega_{\dR}(\End L),\Omega_{\dR}(E\otimes L))$: the dgla structure will be induced on both sides by the commutator of the dga structure.

At each level $X_k$, we fix a K\"ahler metric on $X_k$ and an harmonic metric on $L_k$ and $E_k$. This induces an harmonic metric on $(\End L)_k$ and on $L_k\otimes E_k$, by the procedure described in \cite[\S 1 Functoriality]{S92}. Let $D, D', D''$ the differential operators on the $C^\infty$ sections of $L_k\otimes E_k$ induced by the harmonic metric \cite[\S 1 Constructions]{S92} and also their extension to the $C^\infty$ differential forms $\cE^\bullet(L_k\otimes E_k)$, induced both by the harmonic metric and the K\"ahler metric. Let $f:X_{k+1}\to X_k$ be any of the simplicial maps. By functoriality of Simpson's correspondence between flat bundles, harmonic bundles and Higgs bundles we have that, for any $j\ge 0$, the following square commutes:
$$
\begin{tikzcd}
\cE^{j+1} (E_k\otimes L_k) \arrow[r, "f^*"] & \cE^{j+1}(E_{k+1}\otimes L_{k+1}) \\
\cE^j (E_k\otimes L_k) \arrow[r, "f^*"]\arrow[u, "\star "] & \cE^j (E_{k+1}\otimes L_{k+1})\arrow[u, "\star "]
\end{tikzcd}
$$

where $\star$ is any of the operators $D,D',D''$. In particular the pullback $f^*$ sends $\ker(D')^j_k:=\ker(D' : \cE_k^j\to\cE_k^{j+1})$ to $\ker(D')^j_{k+1}:=\ker(D' : \cE_{k+1}^j\to\cE_{k+1}^{j+1})$ and the square
$$
\begin{tikzcd}
\ker(D')^{j+1}_k \arrow[r, "f^*"] & \ker(D')^{j+1}_{k+1} \\
\ker(D')^j_k \arrow[r, "f^*"]\arrow[u, "D'' "] & \ker(D')^j_{k+1}\arrow[u, "D'' "]
\end{tikzcd}
$$
commutes. We deduce that there is a morphism, given by the natural inclusion, of cosimplicial dg-vector spaces:
\begin{small}
$$
\begin{tikzcd}
\vdots                                       & \vdots                                                                         &        \\
\ker(D')^{1}_0 \arrow[r, shift left]\arrow[r, shift right] \arrow[u, "D'' "] &\ker(D')^{1}_1 \arrow[r]\arrow[r, shift left=2]\arrow[r, shift right=2] \arrow[u, "D'' "] & \ldots \\
\ker(D')^{0}_0 \arrow[r, shift left]\arrow[r, shift right] \arrow[u, "D'' "] & \ker(D')^{0}_1 \arrow[r]\arrow[r, shift left=2]\arrow[r, shift right=2] \arrow[u, "D'' "]  & \ldots \\      
\end{tikzcd}
\qquad\hookrightarrow\qquad
\begin{tikzcd}
\vdots                                       & \vdots                                                                         &        \\
\cE^1(E_0\otimes L_0) \arrow[r, shift left]\arrow[r, shift right] \arrow[u, "D"] & \cE^1(E_1\otimes L_1) \arrow[r]\arrow[r, shift left=2]\arrow[r, shift right=2] \arrow[u, "D"] & \ldots \\
\cE^0(E_0\otimes L_0) \arrow[r, shift left]\arrow[r, shift right] \arrow[u, "D"] & \cE^0(E_1\otimes L_1) \arrow[r]\arrow[r, shift left=2]\arrow[r, shift right=2] \arrow[u, "D"]  & \ldots \\      
\end{tikzcd}
$$
\end{small}
By \cite[Lemma 2.2]{S92} the induced morphism on the columns is a quasi-isomorphism. We perform the same construction with $\End L_\bullet$ instead of $E_\bullet\otimes L_\bullet$, and remark that we obtain a quasi-isomorphism of \emph{pairs}. By Lemma \ref{pairs}, the pairs $(\bD \cE^\bullet (\End L_\bullet), \bD \cE^\bullet (E_\bullet\otimes L_\bullet))$ and $(\bD \ker_{D'}(\End L_\bullet), \bD \ker_{D'}(E_\bullet\otimes L_\bullet))$ are quasi-isomorphic (the splitness hypothesis of {\em loc. cit.} is satisfied since all the cosimplicial vector spaces are split over $\pi_0(X_\bullet)$). We now employ the same strategy to go from the cosimplicial dga pair $(\ker_{D'}(\End L_\bullet), \ker_{D'}(E_\bullet\otimes L_\bullet))$ to its vertical $D''$-cohomology, using again the commutation of pullbacks with the operators $D,D',D''$ and \cite[Lemma 2.2]{S92}. Therefore we obtain a quasi-isomorphism of cosimplicial dga pairs between $(\ker_{D'}(\End L_\bullet), \ker_{D'}(E_\bullet\otimes L_\bullet))$ and $(H^\bullet(X_\bullet, \End L_\bullet), H^\bullet(X_\bullet, E_\bullet\otimes L_\bullet))$. We now remark that on this last pair there is a clear candidate grading $W$: namely, we define $W_kH^\bullet(X_\bullet, \End L_\bullet)$ to be the $k$-th row
$$
W_kH^\bullet(X_\bullet, \End L_\bullet):=
\begin{tikzcd}
{\big{[}} H^k(\End L_0) \arrow[r, shift left]\arrow[r, shift right] & H^k(\End L_1) \arrow[r]\arrow[r, shift left=2]\arrow[r, shift right=2] & \ldots {\big{]}}
\end{tikzcd}
$$
and similarly for $H^\bullet(X_\bullet, E_\bullet\otimes L_\bullet)$, for all $k\ge 0$.

The sought-for pair $(A,M)$ is then given by $(\bD H^\bullet(X_\bullet, \End L_\bullet), \bD H^\bullet(X_\bullet, E_\bullet\otimes L_\bullet))$ and the grading $W$ by
\begin{align*}
W_k \bD H^\bullet(X_\bullet, \End L_\bullet) &:= \bD W_kH^\bullet(X_\bullet, \End L_\bullet),\\
W_k \bD H^\bullet(X_\bullet, E_\bullet\otimes L_\bullet) &:= \bD W_kH^\bullet(X_\bullet, E_\bullet\otimes L_\bullet).
\end{align*}
It is a straightforward computation to check that $d(W_iA^n)\subset W_iA^{n+1}$ and $W_iA^n\cdot W_jA^m \subset W_{i+j}A^{n+m}$. On $H^\bullet A$, the filtration associated to the grading is the weight filtration: indeed, forgetting the vertical zero differentials, the double complex associated to the cosimplicial dga $H^\bullet(X_\bullet, \End L_\bullet) = H^\bullet(X_\bullet, \C_\bullet)$  is the first page of the spectral sequence associated to the complexified Deligne weight filtration on $H^\bullet(X,\C)$, by \cite[8.1.19.1]{D73}.\hfill$\qed$

\begin{proof}[Outline of proof of Theorem \ref{brinfty}]
The arguments outlined here are the same as in \cite[4.2, 4.4]{BR19}, to which we refer for a more comprehensive treatment. By \cite[Theorem 7.4 and Remark 7.3]{BW15}, the completed local ring of $\Sigma^i_k(E)$ at $x$ represents the deformation functor
$$\Def^i_k(\Omega_{\dR}(\End L_x),\Omega_{\dR}(E\otimes L_x)).$$
For the definition of the above functor see \cite[Definition 3.15]{BW15}. The quasi-isomorphism between $(\Omega_{\dR}(\End L_x),\Omega_{\dR}(E\otimes L_x))$ and $(A,M)$ induces, by \cite[Theorem 3.16]{BW15}, an isomorphism of functors $\Def^i_k(\Omega_{\dR}(\End L_x),\Omega_{\dR}(E\otimes L_x))\cong \Def^i_k(A,M)$. By the theory of $L_\infty$ pairs, there exists a structure (unique up to isomorphism) of $L_\infty$ pair on $(H^\bullet A, H^\bullet M)$, compatible with induced grading $W$ and such that $(H^\bullet A, H^\bullet M)$ is quasi-isomorphic to $(A,M)$ as $L_\infty$-pairs, see \cite[Theorem 1.6, Corollary 2.20]{BR19}. The quasi-isomorphism of pairs induces then an isomorphism of deformation functors 
$$\Def^i_k(A,M)\cong \Def^i_k(H^\bullet A, H^\bullet M),$$
where now the right-hand side denotes the deformations in the $L_\infty$ sense \cite[Section 3]{BR19}. The advantage of the last isomorphism is that $\Def^i_k(H^\bullet A, H^\bullet M)$ is pro-represented by an explicit ring, namely by the completed local ring at $0$ of the resonance variety
$$\cR^i_k(H^\bullet A, H^\bullet M):= \{ \omega\in H^1 A : \dim_\C H^i(H^\bullet M, d_\omega)\ge k\}.$$
Here $d_\omega := \sum_{n\ge 0} \frac{1}{n!} m_{n+1}(\omega^{\otimes n},\cdot) $ and $m_{n+1}: (H^\bullet A)^{\otimes n}\otimes H^\bullet M\to H^\bullet M$ are the multiplication maps of $H^\bullet M$ as an $L_\infty$ module over $H^\bullet A$, for $n\ge 0$.
As we prove below, the differential $d_\omega$ involved in the definition is a finite sum of terms so the expression above is well-defined.
The fact that the germ at $0$ pro-represents the deformation functor is exactly the argument in \cite[4.2. Proof of Theorem 1.5]{BR19} (the main point is that since $\Omega_{\dR}(\End L_x)=\Omega_{\dR}(\C)$ is a commutative dga, the induced $L_\infty$-structure on $H^\bullet A$ is trivial. In turn, this implies that the Maurer-Cartan equation is trivial and moreover there is no homotopy equivalence to mod-out by).

We now show that there exists a positive integer $N_0$ such that $m_{n+1}(\omega^{\otimes n},\nu)=0$ for all $\omega\in H^1 A, \nu\in H^\bullet M$ and $n\ge N_0$. Note that since $W_0H^1A=0$, every $\omega\in H^1A$ is a sum of elements of positive weight. From this and the fact that the grading $W$ on the pair $(H^\bullet A, H^\bullet M)$ is compatible with the $L_\infty$-module operations (i.e. the weight is additive under multiplication), we deduce that for all $k\in\Z$ and $\nu\in H^k M$ we have that $m_{n+1}(\omega, \ldots, \omega,\nu)\in H^{k+1}M$ is a sum of elements with graded weight $\ge n$. The vector spaces $H^k M$ are finite dimensional and are zero for $k > 2\dim(X)$: in particular, there exists an integer $N_0$ such that for all $h\ge N_0$ and $k\in \Z$, $W_h H^k M = 0$, proving our claim.

We deduce that the sum $d_\omega$ in the definition of the resonance variety is actually a finite sum of terms, whose number is bounded uniformly in $\omega\in H^1 A$. Hence $\cR^i_k(H^\bullet A, H^\bullet M)_{(0)}$ is a quotient of $H^1 A_{(0)}$ by the ideal generated by a finite number of polynomials. Moreover the following square commutes:
$$
\begin{tikzcd}
{\mathcal{R}^i_k(H^\bullet A, H^\bullet M)_{(0)}} \arrow[r, hook] \arrow[d, "\exp"] & H^1 A_{(0)} \arrow[d, "\exp"] \\
\Sigma^i_k(E)_x \arrow[r, hook]                                             & {\bM_{\B}(X,1)_x}       
\end{tikzcd}
$$
The theorem now follows from a bialgebraicity statement for algebraic tori, see \cite[Corollary 2.2]{BW17}: in words, if the germ at $0\in \C^n$ of an algebraic variety is transformed, under the exponential map $\C^n\to (\C^\times)^n$, to the germ at $1$ of an algebraic variety, then the latter is the germ of a subtorus.
\end{proof}

%%%%%%%%%%%%% SECTION 4

\section{Motivicity of absolute sets}

\subsection{Motivicity in rank one} Let $X$ be a complex algebraic variety. Recall that $\bM_{\B}(X,1)$ is a finite union of algebraic tori defined over $\Q$ and the algebraic tangent space at the identity is canonically identified with $H^1(X,\Q)$. By work of Deligne, it is endowed with a functorial mixed Hodge structure. We recall the definition of motivicity from \cite{EK}. 

\begin{defi}\label{defimotivic}
A $\Q$-subtorus $M\subset \bM_{\B}(X,1)$ is {\em motivic} if $T_eM\subset H^1(X,\Q)$ is a sub-MHS, where $T_eM$ is the algebraic tangent space of $M$ at the identity. 
\end{defi}

In \emph{loc. cit.} it is proved, using Faltings' theorem on the Mordell conjecture, that the cohomology jump loci $\Sigma^i_k(\cF)$ in $\bM_{\B}(X,1)$ are finite unions of translated motivic subtori, where $X$ is an algebraic variety satisfying $W_0H^1(X,\Q) = 0$ and $\cF$ is arithmetic. We prove this fact for certain components of absolute $\C$-constructible subsets for normal varieties. Denote by $\bM_{\B}(X,1)^0$ the connected component of the Betti moduli space containing the constant local system.

\begin{prop}\label{thm_motiv_rank_one}
Let $X$ be a normal quasi-projective variety and $S$ an absolute $\C$-constructible subset contained in $\bM_{\B}(X,1)^0(\C)$. Then the Zariski-closure of an irreducible component of $S$ is a translated motivic subtorus.
\end{prop}

\begin{proof}
Consider first the case where $X$ is smooth. 
Let $\overline{X}$ be a smooth projective compactification of $X$ with normal crossing divisor $D$. 
We can and will assume that $X,\overline{X}, D$ are defined over a common countable subfield $K\subset \C$.
Let $Z$ be an irreducible component of $S$.
By \cite[Proposition 10.4.3]{BW20}, irreducible components of absolute $\C$-constructible subsets in rank one are absolute $\C$-constructible.
By Remark \ref{rem_abs_vs_moduli_abs}, $Z$ is also moduli-absolute $\C$-constructible. 
Let $RH^\an:\bM_{\dR}(\overline{X}/D,1)^\an\to \bM_{\B}(X,1)^\an$ be the analytic Riemann-Hilbert correspondence and $W$ be an analytic irreducible component of $(RH^{\an})^{-1}(Z)$.
Since $Z$ is moduli-absolute $\C$-constructible, we have that for all $\sigma\in\Gal(\C/K)$ the set $\sigma(W)\subset \bM_{\dR}(\overline{X}/D,1)^\an$ is analytically constructible (here we use that $X,\overline{X}, D$ are defined over $K$, hence $\bM_{\dR}(\overline{X}/D,1)$ is as well).
Up to enlarging the field $K$ and preserving its countability, we can also assume that the irreducible component of $\bM_{\dR}(\overline{X}/D,1)$ containing $W$ is defined over $K$.
Then \cite[Theorem 7.4.6]{BW20} implies that the Euclidean closure of $W$ is algebraic (and irreducible) in $\bM_{\dR}(\overline{X}/D,1)$.
Let $\bM$ be the connected component of $\bM_{\dR}(\overline{X}/D,1)$ containing $W$ and $\bM_0$ be the connected component containing the trivial connection.
After choosing a point of $(RH^\an)^{-1}(1)\cap \bM$ as origin, one constructs an algebraic isomorphism $\bM\xrightarrow{\sim} \bM_0$, commuting with $RH^\an$. 
Let $W'$ be the image of $W$ in $\bM_0$.
With these hypotheses in place, the Zariski closure $\overline{Z}$ is an irreducible Betti-de Rham subset of $\bM_{\B}(X,1)^0(\C)$ \cite[Definition 3.1 and Section 4]{BW17} i.e. $\dim(\overline{W'})=\dim(\overline{Z})$ and $RH^\an(\overline{W'})\subset \overline{Z}$.
Now [{\em loc. cit.}, Theorem 3.3] implies that $\overline{Z}$ is a translate of a motivic subtorus. 
We note that the case where $X$ is smooth projective was already treated in \cite{S93}.

Assume now that $X$ is only normal. Let $\widetilde{X}\to X$ be a resolution of singularities, $i: \bM_{\B}(X,1)\hookrightarrow \bM_{\B}(\widetilde{X},1)$ the embedding and $S$ an absolute $\C$-constructible subset contained in $\bM_{\B}(X,1)^0(\C)$. By definition, $i(S)$ is absolute $\C$-constructible in $\bM_{\B}(\widetilde{X},1)(\C)$ and it is contained in $\bM_{\B}(\widetilde{X},1)^0(\C)$. By the smooth case treated above, the closure of an irreducible component is a translated motivic subtorus in $\bM_{\B}(\widetilde{X},1)$. To conclude that the closure of an irreducible component of $S$ is motivic in $\bM_{\B}(X,1)$, it suffices to observe that $i(\bM_{\B}(X,1)^0)$ is itself a motivic subtorus of $\bM_{\B}(\widetilde{X},1)$ since the pullback $H^1(X,\Q)\to H^1(\widetilde{X},\Q)$ is a morphism of Hodge structures.
\end{proof}

By applying the previous proposition to the cohomology jump loci we obtain the motivicity of cohomology jump loci mentioned in the introduction:
\begin{cor}\label{cor_motiv_rank_one}
Let $X$ be a normal quasi-projective variety and $\cF\in D^b_c(X,\C)$. The subtori corresponding to the irreducible components of $\Sigma^i_k(\cF)\cap \bM_{\B}(X,1)^0(\C)$ are motivic subtori.
\end{cor}
\begin{proof}
If $S\subset \bM_{\B}(X,1)(\C)$ is an absolute $\C$-closed subset then the intersection $S\cap \bM_{\B}(X,1)^0(\C)$ is absolute $\C$-closed as well (to see this, embed $\bM_{\B}(X,1)$ into $\bM_{\B}(\widetilde{X},1)$ and apply the fact that taking irreducible or connected components preserves absolute constructibility \cite[Proposition 10.4.3]{BW20}). Now the result follows from Proposition \ref{thm_motiv_rank_one} and the fact that $\Sigma^i_k(\cF)$ is absolute $\C$-closed.
\end{proof}

\subsection{Motivicity in higher rank}\label{higherrank} For the purpose of studying motivicity in higher rank, it is more convenient to work with the representation variety $\bR(X,x_0,n)$ instead of $\bM_{\B}(X,n)$ (note that they coincide in rank one). For $n\ge 2$, the space $\bR(X,x_0,n)$ is no longer an algebraic group and the global geometry of ``special subvarieties" such as the cohomology jump loci is not well understood. Nonetheless there is a good description of their formal geometry at a point. This is the essential ingredient in what follows.

We recall here some basic definitions of the objects we will deal with. A more detailed treatment can be found in \cite[\S 1]{ES}. 

\begin{defi}\label{hodge}
Let $w\in\Z$ be an integer. A \emph{$\C$-Hodge structure of weight $w$}  is the datum of a $\C$-vector space $V$ (allowed to be infinite dimensional) endowed with two decreasing filtrations $F^\bullet, \overline{G}^\bullet$ such that $Gr_F^pGr_{\overline{G}}^q V = 0$ unless $p+q = w$. A \emph{polarization} $S$ on a finite dimensional $\C$-Hodge structure $V$ of weight $w$ is an Hermitian form such that the Hodge decomposition of $V$ is orthogonal with respect to $S$ and $(-1)^{p+w}S|_{H^{p,q}}$ is positive definite.

A \emph{$\C$-mixed Hodge structure} is the datum of $\C$-vector space $V$ endowed with two decreasing filtrations $F^\bullet, \overline{G}^\bullet$ and an increasing filtration $W_\bullet$ such that $Gr_W^w V$, endowed with the induced filtrations $F^\bullet$, $\overline{G}^\bullet$ is a $\C$-Hodge structure of weight $w$. A $\C$MHS $V$ is {\em defined over} $\R$ if $V$ is the complexification of a real vector space, the filtration $W_\bullet$ is defined over $\R$ and $\overline{G}^\bullet$ is the complex conjugate of $F^\bullet$. 

Let $X$ be a complex manifold. A \emph{$\C$-variation of Hodge structures of weight $w$} is the datum of a $C^\infty$ complex vector bundle $\cV$ on $X$, a decomposition $\cV = \oplus_{p+q = w} \cV^{p,q}$ of $C^\infty$ bundles and a flat connection $\nabla: \cV\to \cE^1\otimes \cV$ satisfying Griffiths' transversality:
$$\nabla: \cV^{p,q}\to \cE^{1,0}(\cV^{p-1,q+1})\oplus\cE^{1,0}(\cV^{p,q})\oplus\cE^{0,1}(\cV^{q,p})\oplus\cE^{0,1}(\cV^{q+1,p-1}),$$
where $\cE^n$ (resp. $\cE^{a,b}$) denotes the sheaf of smooth $n$-forms (resp. smooth forms of type $(a,b)$). A {\em polarization} on a $\C$VHS $(\cV,\nabla)$ of weight $w$ is a flat Hermitian form $S$ such that $\cV_x = \oplus \cV_x^{p,q}$ is a $\C$-Hodge structure of weight $w$ polarized by $S$, for all $x\in X$.
\end{defi}

Let $X$ be a smooth projective variety, $x_0\in X(\C)$ a base point and $\rho \in \bR(X,x_0,n)(\C)$ a representation underlying a polarized $\C$VHS. Let $\cO_\rho$ be the local ring at $\rho$ and let $\fm$ be the maximal ideal of its completion $\widehat{\cO}_\rho$. The ring $\widehat{\cO}_\rho$ is endowed with a functorial $\C$MHS and if the representation $\rho$ is defined over $\R$, the MHS is defined over $\R$ and is split, i.e. it is isomorphic as $\R$MHS to a direct sum of pure Hodge structures defined over $\R$. Its construction is due implicitly to Goldman and Millson in \cite{GM} and spelled out in \cite[Section 2.3]{ES}. We recall its definition here, following \emph{loc. cit}. 

Denote by $L_\rho$ the local system on $X$ defined by $\rho$ and $\epsilon_{x_0}: H^0(X,\End L_\rho)\to (\End L_\rho)_{x_0}$ the evaluation at $x_0$. The {\em quadratic cone} at $\rho$ is the algebraic variety given by
$$\cQ(L_\rho) := \{ \omega \in H^1(X,\End L_\rho): [\omega,\omega] = 0\}\subset H^1(X,\End L_\rho).$$
The main result in \cite{GM} constructs an isomorphism between $\widehat{\cO}_\rho$ and the completion of the local ring at the origin in the space \begin{equation}\label{split}
\cQ(L_\rho) \times (\End L_\rho)_{x_0}/ \im\,\, \epsilon_{x_0}.
\end{equation}
The completed local ring at the origin in $\cQ(L_\rho)$ is given by 
\begin{equation}\label{quot}
Sym^\bullet H^1(X,\End L_\rho)^\vee/ I
\end{equation}
where $I$ is the homogeneous ideal generated by $I_2 := \im(H^2(X,\End L_\rho)^\vee \to Sym^2 H^1(X,\End L_\rho)^\vee)$ and the latter map is the transpose of the cup product
$$[-,-] : H^1(X,\End L_\rho)\otimes H^1(X,\End L_\rho)\to H^2(X,\End L_\rho).$$
For $n\ge 3$ define the subspaces
$$I_n:= I_2 \otimes Sym^{n-2}H^1(X,\End L_\rho)^\vee \subset Sym^nH^1(X,\End L_\rho)^\vee .$$
If we define $I_0 = I_1 = 0$,  then $I_n$ corresponds to the degree $n$ piece of $I$. Thus $\widehat{\cQ(L_\rho)}_{(0)}$ is given by
\begin{equation}\label{iso}
\prod_{n\ge 0} Sym^nH^1(X,\End L_\rho)^\vee /I_n.
\end{equation}
By the generalized K\"ahler identities for polarized $\C$VHS (see \cite[Section 2]{Z79} for real VHS), the cohomology groups $H^n(X,\End L_\rho)$ carry a functorial $\C$MHS of weight $n$ and the cup product is a morphism of MHS. The direct product in (\ref{iso}) then carries a (infinite-dimensional) $\C$MHS, and the direct factor in place $n$ is pure of weight $-n$. A similar discussion applies to the second factor in (\ref{split}): the completed local ring at $0$ of $(\End L_\rho)_{x_0}/ \im\,\, \epsilon_{x_0}$ is naturally an infinite dimensional $\C$-Hodge structure of weight zero.

\begin{defi}
Let $\rho \in \bR(X,x_0,n)(\C)$ be a representation underlying a polarized $\C$VHS. The {\em canonical split} $\C$MHS on $\widehat{\cO}_\rho$ is the $\C$MHS constructed in the preceding paragraph.
\end{defi}

Define the cohomology jump loci in $\bR(X,x_0,n)$ with indices $i,k\ge 0$ in the obvious way:
$$\widetilde{\Sigma}^i_k := \{ \rho\in \bR(X,x_0,n)(\C) : \dim_\C H^i(X,L_\rho)\ge k\}.$$

\begin{lem}
Let $X$ be a smooth projective variety. For every $i,k\in\Z_{\ge 0}$, the cohomology jump locus $\widetilde{\Sigma}^i_k$ contains representations underlying polarized $\C$VHS. 
\end{lem}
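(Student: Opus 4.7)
The strategy is to use Simpson's nonabelian Hodge theory: transfer the problem from $\bR(X,x_0,n)$ down to $\bM_B(X,n)$, exploit the $\C^\times$-action on the Betti moduli space (transported from the Dolbeault side via the Simpson correspondence), and use the fact that the limit of any orbit as $\lambda\to 0$ is a $\C^\times$-fixed point, corresponding to a polarizable $\C$VHS. Let $\pi:\bR(X,x_0,n)\to \bM_B(X,n)$ be the GIT quotient. We may assume $\widetilde{\Sigma}^i_k$ is nonempty (otherwise the statement is vacuous), so that the image $\pi(\widetilde{\Sigma}^i_k)\subset \Sigma^i_k$ is nonempty. It suffices to produce a point in $\Sigma^i_k$ corresponding to a semisimple representation underlying a polarizable $\C$VHS, then lift along $\pi$.

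The first key point is that $\Sigma^i_k\subset \bM_B(X,n)$ is closed and $\C^\times$-invariant. Given a semisimple $\rho$, let $(\cE,\theta)$ be the associated Higgs bundle. By Simpson's theorem the Betti cohomology $H^i(X,L_\rho)$ is computed as the hypercohomology of the Dolbeault complex $(\cE\otimes\Omega_X^\bullet,\theta)$. Under the $\C^\times$-action, $[\rho]\mapsto [\lambda\cdot\rho]$ corresponds to $(\cE,\theta)\mapsto (\cE,\lambda\theta)$, and the complexes $(\cE\otimes\Omega_X^\bullet,\theta)$ and $(\cE\otimes\Omega_X^\bullet,\lambda\theta)$ are isomorphic via multiplication by $\lambda^k$ in degree $k$. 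Hence hypercohomology dimensions are constant along $\C^\times$-orbits and $\Sigma^i_k$ is $\C^\times$-invariant; closedness is well known.

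The second key point is Simpson's result (in the projective case) that for every $[\rho]\in\bM_B(X,n)$ the limit $[\rho_0]:=\lim_{\lambda\to 0}\lambda\cdot[\rho]$ exists in $\bM_B(X,n)$ and is a $\C^\times$-fixed point, and that the fixed points of this action are precisely the isomorphism classes of semisimple representations underlying polarizable complex variations of Hodge structure. Starting from any $[\rho]\in \Sigma^i_k$ and applying the limit yields $[\rho_0]\in\Sigma^i_k$ (by closedness and invariance) which is a fixed point, hence underlies a polarized $\C$VHS. Choosing any $\rho_0\in\bR(X,x_0,n)$ with $\pi(\rho_0)=[\rho_0]$ produces the desired representation in $\widetilde{\Sigma}^i_k$.

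I do not expect serious obstacles: both the $\C^\times$-invariance of cohomology jump loci and the existence of limits under the Simpson $\C^\times$-flow are classical outputs of nonabelian Hodge theory. The only mild subtlety is the passage between the two conventions for $\widetilde{\Sigma}^i_k$ (as $\pi^{-1}(\Sigma^i_k)$ or as the naive locus in $\bR$), but since we ultimately produce a semisimple $\rho_0$, both definitions of $\widetilde{\Sigma}^i_k$ contain it.
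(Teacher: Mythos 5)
Your proof is correct and follows essentially the same route as the paper's: reduce to $\Sigma^i_k\subset\bM_B(X,n)$, establish $\C^\times$-invariance via the Dolbeault interpretation of twisted cohomology for harmonic bundles (the paper cites Simpson's Lemma 2.2; your $\lambda^k$-rescaling isomorphism is the standard way to see the dimension equality), use closedness, and then flow to the $\C^\times$-fixed limit point $\lim_{t\to 0}(\cE,t\theta)$, which underlies a polarized $\C$VHS.
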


\begin{proof}
It is sufficient to prove that the image of $\widetilde{\Sigma}^i_k$ in $\bM_{\B}(X,n)$, which is $\Sigma^i_k$, contains $\C$VHS. We claim that ${\Sigma}^i_k$ is invariant under the action of $\C^\times$. This follows from \cite[Lemma 2.2]{S92} where it is proved that if $E$ is a harmonic bundle then there are natural isomorphisms between the Dolbeault cohomology of $E$ and the de Rham cohomology of $E$. Here the Dolbeault cohomology of $E$ is the cohomology of the complex
$$ 0\to \cE\xrightarrow{\theta}\cE\otimes_{\cO_X} \Omega^1\xrightarrow{\theta}\cE\otimes_{\cO_X} \Omega^2\to \ldots .$$
where $(\cE,\theta)$ is the Higgs bundle associated to $E$. Hence the jump locus on the Betti side corresponds to the jump locus on the Dolbeault side. Moreover, if $(\cE,\theta)$ is a Higgs bundle and $\lambda\in\C^\times$ then $\dim_\C H^i_{\Dol}(\cE,\lambda\theta) = \dim_\C H^i_{\Dol}(\cE,\theta)$, which proves that ${\Sigma}^i_k$ is $\C^\times$-invariant. To conclude observe that ${\Sigma}^i_k$ is closed. This in turn implies that $\Sigma^i_k$ contains $\C^\times$-invariant points, i.e. polarized $\C$VHS. Indeed, \cite[Proposition 1.4]{S92} implies that if $(\cE,\theta)$ is a Higgs bundle then $\lim_{t\to 0}(\cE,t\theta)$ exists in the Dolbeault moduli space and is $\C^\times$-invariant.
\end{proof}
\begin{thm}\label{tangent}
Let $X$ be a smooth projective variety with a base point $x_0$, $\rho\in\bR(X,x_0,n)(\C)$ a representation underlying a polarized $\C$VHS. Let $i\ge 0$ and $k=\dim H^i(X,L_\rho)$. Then the ideal cutting $\tilde{\Sigma}^i_k$ in $\widehat{\cO}_\rho$ is a sub-MHS of the canonical split $\C$MHS.
\end{thm}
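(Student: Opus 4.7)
The plan is to reduce the statement to a property of the $L_\infty$-pair structure on the cohomology of the dga pair $(\Omega_\dR(\End L_\rho),\Omega_\dR(L_\rho))$, by combining the deformation-theoretic description of cohomology jump loci from Budur--Wang with a pair-refinement of the Eyssidieux--Simpson / Lef\`evre construction underlying the canonical split $\C$MHS.

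First I would invoke the Budur--Wang pro-representability theorem (as already used in the sketch of Theorem \ref{brinfty}) to identify the completed local ring of $\widetilde{\Sigma}^i_k$ at $\rho$ with the deformation functor $\Def^i_k(\Omega_\dR(\End L_\rho),\Omega_\dR(L_\rho))$; in particular, the ideal $I\subset\widehat{\cO}_\rho$ cutting $\widetilde{\Sigma}^i_k$ is the jump obstruction ideal of this pair. Using the polarized $\C$VHS underlying $\rho$, together with the generalized K\"ahler identities, I would then transfer this dga pair along the harmonic projector to an $L_\infty$-pair structure on the cohomology $(H^\bullet(X,\End L_\rho), H^\bullet(X,L_\rho))$ whose operations $\{m_n^A, m_n^M\}$ are morphisms of $\C$-Hodge structures of the appropriate bidegree. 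The absolute case of this transfer is exactly the construction of the canonical split $\C$MHS on $\widehat{\cO}_\rho$ in \cite{ES} and \cite{L1}, and the pair case can be reduced to the absolute one via the trivial extension $A\oplus M$ of Lemma \ref{pairs0}, which inherits a pure $\C$VHS on cohomology.

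Next, by the standard $L_\infty$-invariance of the jump-locus deformation functor, $\Def^i_k$ applied to the cohomology $L_\infty$-pair is pro-represented by the completion at $0$ of the resonance variety
\[\cR^i_k = \{\omega\in \cQ(L_\rho) : \dim_\C H^i(H^\bullet(X,L_\rho),d_\omega)\ge k\},\qquad d_\omega = \sum_{n\ge 0}\tfrac{1}{n!}\,m_{n+1}^M(\omega^{\otimes n},\cdot).\]
Because each $H^n(X,\End L_\rho)$ is pure of weight $n$, the truncation argument recalled in the proof sketch of Theorem \ref{brinfty} shows that $d_\omega$ is a \emph{finite} sum, uniformly in $\omega\in H^1(X,\End L_\rho)$. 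Hence $\cR^i_k$ is cut out inside $\cQ(L_\rho)$ by vanishing of appropriate minors of a polynomial matrix whose coefficients come from the MHS-morphisms $m_n^M$, so these minors lie in sub-$\C$MHS of $\widehat{\Sym^\bullet}H^1(X,\End L_\rho)^\vee$ and the ideal they generate is a sub-$\C$MHS there. Pulling this ideal back through the Goldman--Millson splitting and noting that the complementary smooth factor $(\End L_\rho)_{x_0}/\im\epsilon_{x_0}$ is pure of weight zero and trivially stable under $\widetilde{\Sigma}^i_k$, one concludes that $I\subset \widehat{\cO}_\rho$ is a sub-MHS of the canonical split $\C$MHS, as required.

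The main obstacle I expect is the second step: producing a \emph{pair-level} $L_\infty$ minimal model whose operations $m_n^M$ on $H^\bullet(X,L_\rho)$ are morphisms of $\C$-Hodge structures with the correct weight shifts. This requires combining the harmonic projectors for $L_\rho$ and $\End L_\rho$ (which preserve the Hodge bidegree thanks to the generalized K\"ahler identities for polarized $\C$VHS on smooth projective varieties) with a homotopy-perturbation construction for $L_\infty$-modules, paying careful attention to the weight bookkeeping between the algebra and module parts of the transfer. Once this pair version of Eyssidieux--Simpson is in place, the remaining arguments are formal and parallel the strategy of the sketch of Theorem \ref{brinfty}.
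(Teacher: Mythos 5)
Your proposal has the right skeleton (Budur--Wang deformation theory at $\rho$, the Goldman--Millson/Eyssidieux--Simpson splitting, a resonance variety cut out by a matrix with Hodge-pure coefficients), but two steps do not hold up as written. The first is the detour through a Hodge-compatible $L_\infty$-pair minimal model: you flag its construction as the main obstacle and do not carry it out, and it is in fact unnecessary. Since $\rho$ is semisimple (it underlies a polarized $\C$VHS) and $X$ is smooth projective, \cite[Theorem 7.8]{BW15} applies directly and identifies the completed local ring of $\widetilde{\Sigma}^i_k$ at $\rho$ with the completed local ring at $0$ of $\cR^i_k(L_\rho)\times (\End L_\rho)_{x_0}/\im\,\epsilon_{x_0}$, where $\cR^i_k(L_\rho)$ sits inside the quadratic cone and is defined by the \emph{ordinary} Aomoto complex $(H^\bullet(X,L_\rho),\omega\wedge)$. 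Formality removes all higher operations $m_{n+1}$, so neither the transfer of an $L_\infty$-pair structure compatible with the bigrading nor the weight-truncation argument from Theorem \ref{brinfty} is needed; the only Hodge-theoretic input is that the cup products $H^1(X,\End L_\rho)\otimes H^j(X,L_\rho)\to H^{j+1}(X,L_\rho)$ and the bracket on $H^\bullet(X,\End L_\rho)$ are morphisms of the pure structures furnished by the generalized K\"ahler identities.

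The second, and more serious, issue is your final assertion that the "appropriate minors" lie in sub-$\C$MHS and that the ideal they generate is a sub-$\C$MHS: this is precisely the point at stake, and your argument never uses the hypothesis $k=\dim_\C H^i(X,L_\rho)$. In the paper this hypothesis is essential: when $k$ is maximal, the determinantal ideal cutting the jump condition is generated by the \emph{entries} of the matrix representing the morphism (\ref{map}) (i.e. the $1\times 1$ minors), and since (\ref{map}) is a morphism of split $\C$MHS these entries are elements of $H^1(X,\End L_\rho)^\vee$ that are pure of some bidegree, so the ideal is compatible with the bigrading. For a jump index $a<\dim_\C H^i(X,L_\rho)$ the relevant ideal is generated by minors of size $\ge 2$, and the remark following Theorem \ref{tangent} in the paper states explicitly that it is then unclear whether the resulting ideal is bigraded. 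As written, your proof asserts the general statement without justification, so the crucial last step is a gap; to close it you should restrict to the stated case $k=\dim_\C H^i(X,L_\rho)$ and argue purity of the individual matrix coefficients, as the paper does.
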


\begin{proof}
By \cite[\S 7]{BW15} the infinitesimal structure of $\widetilde{\Sigma}^i_k$ around $\rho$ is governed by the deformation theory of the augmented dgla-pair $(\cA^\bullet(\End L_\rho), \cA^\bullet(L_\rho), \epsilon_{x_0})$, where $\cA^\bullet(-)$ denotes the twisted de Rham complex of $C^\infty$ forms and the augmentation $\epsilon_{x_0}: \cA^0(\End L_\rho)\to (\End L_\rho)_{x_0}$ is the evaluation at $x_0$. Let $R_\rho$ be the local ring of $\widetilde{\Sigma}^i_k$ at $\rho$. The resonance variety is defined by
$$\cR^i_k(L_\rho) := \{ \omega\in H^1(X,\End L_\rho): [\omega, \omega] = 0 \,\,\text{and}\,\, \dim_\C H^i(H^\bullet(X,L_\rho), \omega\wedge) \ge k\},$$
where $(H^\bullet(L_\rho), \omega\wedge)$ is the \emph{Aomoto complex}:
$$\ldots \rightarrow H^r(X,L_\rho)\xrightarrow{\omega\wedge} H^{r+1}(X,L_\rho)\rightarrow\ldots\,\,.$$

Since $\rho$ underlies a $\C$VHS it is semisimple and \cite[7.8]{BW15} implies that the inclusion $\widehat{R}_\rho\subset \widehat{\cO}_\rho$ followed by the Goldman-Millson isomorphism, identifies $\widehat{R}_\rho$ with the completed local ring at $0$ of 
$$\cR^i_k(L_\rho)\times (\End L_\rho)_{x_0}/ \im\,\, \epsilon_{x_0}.$$
To prove the theorem it is enough to prove that the ideal cutting $\widehat{\cR^i_k(L_\rho)_{(0)}}$ in $\widehat{\cQ(L_\rho)_{(0)}}$ is a sub-MHS of the split MHS described above.

Let $Z$ be the subset of $\omega\in H^1(X,\End L_\rho)$ such that the linear map
$$\omega\wedge(-)\oplus\omega\wedge(-): H^{i-1}(X,L_\rho)\oplus H^i(X,L_\rho)\to H^{i}(X,L_\rho)\oplus H^{i+1}(X,L_\rho)$$
has rank bigger than $\dim H^i(L_\rho)-k+1 = 1$. It has a natural scheme structure given by the zero locus of a certain determinantal ideal. By \cite[Section 2]{BW15}, whenever $\omega$ is a Maurer-Cartan element, then $\omega\in Z$ if and only if $\dim_\C H^i(H^\bullet(X,L_\rho), \omega\wedge) \ge k$. The scheme structure on the cohomology jump locus is then given by the scheme theoretic intersection of the quadratic cone with $Z$ (see \cite[Definition 4.4]{BW15}). Let $E$ be the vector space $H^1(X,\End L_\rho)$ and $R = Sym^\bullet E^\vee$. We show that the ideal $J\subset \widehat{R}$ that cuts (the completion of) $Z_{(0)}$ is a sub-MHS, concluding the proof.

Let $A_n:= \widehat{R}/\fm^n$. The ring $A_n$ is isomorphic as $\C$MHS to $\bigoplus_{r = 0}^{n-1} \Sym^r E^\vee$ and the algebra operation $A_n\otimes A_n\to A_n$ is a morphism of $\C$MHS. Let
\begin{equation}\label{map}
\phi_n: (H^{i-1}\oplus H^i)\otimes A_n \xrightarrow{\omega_t\wedge\,\, \oplus\, \,\omega_t \wedge} (H^i\oplus H^{i+1})\otimes A_n
\end{equation}
be the wedge with the tautological element $\omega_t\in E\otimes E^\vee\subset E\otimes A_n$. We show that the map $\phi_n$ is a morphism of $\C$MHS. Let $V_1 = H^{i-1}\oplus H^i$ and $V_2 = H^i\oplus H^{i+1}$. The cup-product corresponds to a morphism $\phi: E\otimes V_1\to V_2$ of $\C$MHS. Tensoring $\phi$ with $A_n$ we get a morphism
$$(E\otimes A_n)\otimes V_1 \to V_2\otimes A_n.$$
Fixing the tautological element (which is of type $(0,0)$) in the summand $E\otimes E^\vee\subset E\otimes A_n$, we get a morphism of $\C$MHS
\begin{equation}\label{eq}
V_1\to V_2\otimes A_n
\end{equation}
and $\phi_n$ is obtained by extending $A_n$-linearly:
$$V_1\otimes A_n \to V_2\otimes A_n \otimes A_n \to V_2\otimes A_n,$$
where the first map is (\ref{eq}) tensored with $A_n$ and the second map is obtained by tensoring the identity on $V_2$ with the multiplication map $A_n \otimes A_n \to A_n$. This shows that $\phi_n$ is a morphism of $\C$MHS. In down to earth terms, this means that the map between the free $A_n$-modules in (\ref{map}) can be given by a matrix with elements in $E^\vee$ that are pure of some bidegree. By \cite[Corollary 2.5 and \S 7]{BW15} the ideal $J_n := J\otimes \widehat{R}/\fm^n$, corresponding to the $n$-th infinitesimal neighborhood of $0$ in $Z$, is the ideal generated by the coefficients of this matrix. Since the coefficients are pure with respect to the bigrading, $J_n$ is compatible with the bigrading.
\end{proof}

\begin{rem}
As explained in \cite[Introduction]{ES}, the split $\C$MHS on $\widehat{\cO}_\rho$ is only an approximation of the ``true" $\C$MHS, which is constructed in {\em loc. cit.} and in \cite{L1}. It is clear that the natural framework of the above theorem should be such a $\C$MHS. After a first version of this paper appeared, Louis-Cl\'ement Lef\`evre proved in \cite[Theorem 1.1]{L3} that Theorem \ref{tangent} holds for this $\C$MHS and for arbitrary indices $i,k\ge 0$.
\end{rem}

We record here that a similar calculation is carried out in \cite[Theorem 1.7]{BR19}, where the authors give a description of the tangent space to the cohomology jump locus associated to an $L_\infty$-pair. Namely, if $(C,M)$ is a $L_\infty$ pair and $h_i=\dim H^iM$ then
$$T\text{Def}^i_k(C,M) = 
\begin{cases}
H^1C & \text{if }\,\,k < h_i,\\
\ker\left( H^1C\to \oplus_{j=i-1,i} \Hom(H^jM, H^{j+1}M) \right) & \text{if }\,\,k = h_i,\\
0 & \text{if }\,\,k > h_i,
\end{cases}
$$
where, in the second case, the map is given by the module structure of $H^\bullet M$ over $H^\bullet C$. It is easy to see that if $C$ and $M$ carry a weight filtration and Hodge filtrations compatible with the $L_\infty$ operations then the tangent space to the cohomology jump loci inherits a $\C$MHS.

%%%%%%%%%%%%% SECTION 5

\section{Jump loci of Hodge theoretic data}

In this section we complement some results of \cite{B09} about the structure of jump loci associated to Hodge theoretic numerical data. 

Let $X$ be a complex algebraic variety. If $L$ is a unitary local system, then Saito's theory of mixed Hodge modules endows the cohomology groups $H^i(X,L)$ with functorial $\C$MHS's. If we fix $k,p,q\in\Z_{\ge 0}$, one can ask about the structure of the loci over which one of the quantities
$$\dim_\C Gr^F_pH^k(X,L),\,\,\dim_\C Gr_W^{p+q}H^k(X,L), \,\,\dim_\C Gr^F_pGr_W^{p+q}H^k(X,L)$$
remains constant, as $L$ varies in the real subvariety $\bU(X,n)\subset \bM_{\B}(X,n)$ of unitary local systems. The same question can be asked with ordinary cohomology replaced by intersection cohomology, or cohomology with compact supports. The most general framework for the problem would involve arbitrary operations in the category of mixed Hodge modules.

We prove the following result for rank one local systems on smooth varieties:

\begin{thm}\label{hodgejump}
Let $U$ be a smooth variety and let $\cA$ be the Boolean algebra generated by torsion-translated real subtori of $\bU(U,1)$.

(1) For all $a,i,k\in\Z_{\ge 0}$, the set
$$ W^{a,i,k}:= \{L \in \bU(U,1):   \dim_\C Gr_W^aH^i(U,L)\ge k\} $$
belongs to $\cA$.

(2) Let $j: U\to X$ be an open embedding into a smooth proper variety $X$ such that $X\setminus U$ is a simple normal crossing divisor. For all $p,q,k\in\Z_{\ge 0}$, the set 
$$V^{p,q,k} := \{L \in \bU(U,1):  \dim_\C Gr^F_p \IH^{p+q}(X,L)\ge k\}$$
belongs to $\cA$. Here $\IH$ denotes intersection cohomology.
\end{thm}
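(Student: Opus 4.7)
The strategy is to combine the description of the $\C$MHS on $H^i(U,L)$ (resp.\ $\IH^{p+q}(X,L)$) for $L$ unitary via Deligne's canonical extension and the log de Rham complex, with the piecewise-algebraic dependence of this extension on $L$ as $L$ varies over the unitary torus. The latter mechanism is essentially the one already exploited in \cite{B09} for Hodge numbers, and the task is to upgrade it so that it sees the individual $Gr^F_p$ and $Gr_W^a$ pieces.

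First, for part (1), I would fix a smooth projective compactification $X \supset U$ with $D = X\setminus U$ a simple normal crossing divisor. For a unitary local system $L$ on $U$, let $\overline{L}$ denote its Deligne canonical extension to $X$, the line bundle whose residues $\Res_{D_i}$ lie in $[0,1)$. By Timmerscheidt, the log de Rham complex $(\Omega^\bullet_X(\log D) \otimes \overline{L}, F, W)$, equipped with its naive Hodge filtration and pole-order weight filtration, computes the $\C$MHS on $H^i(U,L)$, with associated spectral sequences degenerating at $E_1$ and $E_2$ respectively. Consequently, each of $\dim_\C Gr^F_p H^i(U,L)$ and $\dim_\C Gr_W^a H^i(U,L)$ is a fixed integer linear combination of dimensions $h^q(X,\cF_L)$, where $\cF_L$ is one of the coherent sheaves built from $\overline{L}$ via tensoring with log differentials and pushing forward from strata $D_J$ of $D$. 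For part (2), the analogous computation of $\IH^{p+q}(X,L)$ via the intermediate extension of $\overline{L}$ inside $\Omega^\bullet_X(\log D)\otimes \overline{L}$ (after Cattani-Kaplan-Schmid and M.~Saito), whose Hodge spectral sequence likewise degenerates, gives the same conclusion for $\dim_\C Gr^F_p \IH^{p+q}(X,L)$.

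Next, I would analyze the dependence on $L \in \bU(U,1)$. The assignment $L \mapsto \overline{L}$ is piecewise algebraic: the real torus $\bU(U,1)$ decomposes into finitely many closed cells whose walls are torsion-translated real subtori (reflecting the integrality condition on individual residues), and on each cell the residues lift canonically to a fixed linear function of $L$, so that $\overline{L}$ is given by the tensor product of a fixed line bundle on $X$ with an algebraic family of twists by components of $D$. Within such a cell, the sheaves $\cF_L$ vary algebraically with $L$, and the jump loci of the dimensions $h^q(X, \cF_L)$ restricted to the cell form a finite union of torsion-translated subtori intersected with the cell, by the classical theorems of Simpson and Green-Lazarsfeld on jump loci in the Picard variety (or, equivalently, by \cite{B09} itself applied coordinate-by-coordinate). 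Patching over the finitely many cells, and using that the walls themselves already belong to $\cA$, yields the desired conclusion.

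The main obstacle is the uniformity of the spectral sequence degenerations across the entire unitary torus, so that the expressions of $\dim_\C Gr^F_p$ and $\dim_\C Gr_W^a$ as integer linear combinations of $h^q(X,\cF_L)$ hold \emph{identically} on each cell rather than merely generically; this is what allows ``$\dim \geq k$'' to pull back to a determinantal condition on cohomology of coherent sheaves. Fortunately this uniformity is supplied by Saito's theory of polarizable mixed Hodge modules: $\overline{L}$ (respectively, its intermediate extension) underlies a family of pure Hodge modules on $X$ whose Hodge filtration varies coherently, so the degenerations are flat in $L$ within each cell.
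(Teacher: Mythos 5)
Your proposal for part (1) has a genuine gap. You claim that since the weight spectral sequence of the log de Rham complex degenerates at $E_2$, the dimension $\dim_\C Gr_W^a H^i(U,L)$ is ``a fixed integer linear combination of dimensions $h^q(X,\cF_L)$'' for coherent sheaves $\cF_L$. This is false: an $E_2$ degeneration gives $Gr_W^a H^i$ as the middle cohomology of a three-term complex of $E_1$-terms, so its dimension involves the \emph{ranks of the $d_1$-differentials}, which are not determined by the dimensions of the source and target alone. This is exactly the obstacle the paper points to in the final Remark of Section 5 (concerning mixed Hodge numbers $Gr^F_p Gr_W^{p+q}$), where it explicitly says the ranks of these maps cannot be controlled. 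The paper sidesteps this for part (1) by a different route entirely: Timmerscheidt's identification of $W$ (up to reindexing) with the Leray filtration of $j:U\hookrightarrow X$, followed by the absolute $\Q$-constructibility of Leray-graded jump loci from \cite[Corollary 6.4.4]{BW20}. That machinery handles the differentials for free; your reduction does not.

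For part (2) your approach is closer to the paper's in spirit — flat family of Deligne extensions, cell decomposition of $\bU(U,1)$ — but your concluding step is not quite right. The cells are real semi-algebraic pieces of the unitary torus, not complex algebraic families, so Green--Lazarsfeld/Simpson jump-loci theorems for $\Pic^0$ do not apply directly to conclude that the jump loci of $h^q(X,\cF_L)$ restricted to a cell are torsion-translated subtori. The paper instead constructs the flat family $\mathfrak{L}^{>-1}$ of Deligne extensions over a small analytic neighborhood $B$ of a point $b_0\notin\cT$, deduces \emph{upper semi-continuity} of $h^q(X,\Omega^p_X(\log D)\otimes\mathfrak{L}_b^{>-1})$ by properness, and then invokes Simpson's argument from \cite{S93}: $\dim\IH^{p+q}=\sum_a \dim\prescript{}{F}E_1^{a,p+q-a}$, and since the left side is constant on each stratum of the known torus decomposition of $\{\dim\IH^{p+q}\ge k\}$, upper semi-continuity of each summand forces each summand to be constant there too. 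The bad locus $\cT$ of local systems with trivial monodromy around some boundary component is then handled by induction on the number of boundary divisors. This semi-continuity-plus-constancy argument, not a coherent jump locus theorem, is the crux; your ``main obstacle'' paragraph about uniformity via Saito's theory is not the mechanism at work.
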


\begin{proof}
(1) Let $X$ be a smooth compactification of $U$ by a simple normal crossing divisor $D\subset X$. Up to a suitable renumbering of the indices the weight filtration on $H^i(X,L)$ coincides with the Leray filtration associated to the embedding $j:U\hookrightarrow X$ (see Remark to Theorem 7.1 in \cite{T87}). By \cite[Corollary 6.4.4]{BW20}, the jumping loci for the dimension of the graded pieces for Leray filtration induced on $H^i(X,L)$ are absolute $\Q$-constructible, hence belong to the Boolean algebra generated by torsion-translated subtori. Intersecting with $\bU(U,1)$ we obtain the result.

\medskip

(2) By Theorem \ref{mainres}, for any $i,k\ge 0$, the set
$$S^i_k:=\{L \in \bM_{\B}(U,1)(\C):  \dim_\C \IH^i(X,L)= k\}$$
belongs to the Boolean algebra generated by torsion-translated subtori. By the Hodge theory of unitary local systems \cite{T87}, the spectral sequence converging to $\IH^\bullet(X,L)$ associated to the Hodge filtration degenerates at the first page, whenever $L$ is unitary. We will first partition $\bM_{\B}(U,1)$ into subvarieties belonging to the Boolean algebra of torsion-translated subtori and prove that the dimension of $\prescript{}{F}E_1^{p,q}$ varies in an upper semi-continuous way as $L$ varies in $V\cap\bU(U,1)$, for all strata $V$ of the partition. The argument used to conclude is then the same as in \cite{S93}: one writes $\dim_\C \IH^i = \sum_{a} \dim\prescript{}{F}E_1^{a, i-a}$ and when restricted to a subset of $V\cap\bU(U,1)$ where $\dim_\C \IH^i$ is constant, the terms of the sum have to stay constant, by upper-semicontinuity, therefore proving the claim on $V\cap\bU(U,1)$, which in turn proves the theorem. To identify the $E_1$-page we will use the results in \cite{T87}. Note also that in the present case of the complement of a normal crossing divisor in a smooth variety, we have $\IH^\bullet(X,L)\cong H^\bullet(X,j_*L)$ (see e.g. [{\em loc. cit.}, Proposition 5.4]).

Let $D:=X\setminus U$ and write $D = \cup_{i\in I} D_i$ where $D_i$ are the irreducible components. For all $i\in I$, let $D^*_i:=D_i\setminus\cup_{j\neq i} (D_i\cap D_j)$ and, for all subsets $J\subset I$, define $D_J:=\cup_{j\in J} D_j$ and $U_J:=U\cup \bigcup_{j\in J} D^*_j$. 
For each local system $L\in\bM_{\B}(U,1)(\C)$ there corresponds a subset $J\subset I$ such that $L$ has trivial monodromy around $D_j, j\in J$ and non-trivial monodromy around $D_i, i\notin J$. We can partition $\bM_{\B}(U,1)$ as 
$$\bM_{\B}(U,1) = \bigsqcup_{J\subset I} V_J,$$
where $V_J$ is the set of local systems with trivial monodromy around $D_j$, $j\in J$ and nontrivial monodromy around $D_i, i\notin J$. Clearly $V_J$ is an open (possibly empty) subset of $\bM_{\B}(U_J,1)(\C)$ and it belongs to the Boolean algebra generated by torsion-translated subtori of $\bM_{\B}(U,1)$. 
Fix $L\in V_J\cap \bU(U,1)$. By \cite[Theorem 5.1.(a)]{T87}, we have $\prescript{}{F}E_1^{p,q} = H^q(X, \Omega^p_X(\log D_{I\setminus J})\otimes \cL^{>-1})$, where $\cL^{>-1}$ is the Deligne extension of $L$. (Note that in {\em loc. cit.}, certain sheaves denoted by $\widetilde{\Omega}^p_X(\cL^{>-1})$ are introduced, but if the local system has no trivial monodromies around the boundary divisor then they reduce to the usual twisted sheaves of log-forms.) 
In $\bM_{\dR}(U_J,1)$, choose the logarithmic connection $(\cL^{>-1},\nabla)$ corresponding to the Deligne extension of $L$. There exists a neighborhood $\cB$ of $L$ in $V_J^\an$ and a neighborhood $\cB'$ of $(\cL^{>-1},\nabla)$ in $\bM_{\dR}(U_J,1)^\an$ with $RH^\an(\cB')=\cB$ and such that, if $(E,\nabla)\in \cB'$ then $(E,\nabla)$ is the Deligne extension of $RH^\an(E,\nabla)$. Indeed, as a logarithmic connection $(E,\nabla)$ varies in $\bM_{\dR}(U_J,1)$, its vector of residues around the divisor $D_{I\setminus J}$ varies continuously (in fact, algebraically). 
As $(E,\nabla)$ varies in $\cB'$, $\dim H^q(X, \Omega^p_X(\log D_{I\setminus J})\otimes E)$ varies upper-semicontinuously. 
Therefore, as $L$ varies in $\cB\cap \bU(U,1)$ the terms $\dim\prescript{}{F}E_1^{p,q}$ vary upper-semicontinuously.
As explained in the first paragraph, this implies that as $L$ varies in $\cB\cap \bU(U,1)\cap S^{p+q}_k$, for $k\in \Z_{\ge 0}$, the terms $\prescript{}{F}E_1^{p,q}$ have constant dimension. 
The partition of $V_J \cap \bU(U,1)$ defined by the dimensions of $Gr_F^p \IH^{p+q}$ consists of locally closed real-analytic subvarieties and the above argument proves that around each point there is a small neighborhood such that said partition coincides with the one induced by the dimensions of $\IH^{p+q}$. The latter partition is contained in the Boolean algebra generated by torsion-translated real subtori, by Theorem \ref{mainres}.
\end{proof}

In the notation of the previous theorem, the same arguments prove that the stratification of $\bU(U,1)$ given by $\dim_\C\Gr^F_p H^{p+q}(U,L)$ belongs to $\cA$. This implies for example \cite[Theorem 1.3]{B09} and allows one to generalize the assumptions on $\cW$ in Theorem 1.4 of {\em loc. cit.}: namely, $\cW$ can be any unitary local system on $U$ coming from geometry.

\begin{rem}
Let $j: U\hookrightarrow X$ as in Theorem \ref{hodgejump} (2). It is unclear to us whether the jumping loci of \emph{mixed} Hodge numbers of $H^\bullet(U,L)$ determine torsion-translated real subtori $\bU(U,1)$. Let $h^{p,q,k} = \dim_\C Gr_p^FGr_W^{p+q}H^k(U,L)$. The spectral sequence corresponding to the weight filtration degenerates at $E_2$ and it is endowed with a Hodge filtration $F$. The number $h^{p,q,k}$ is equal to the dimension of $Gr^F_p$ applied to the middle cohomology of the complex
$$_{W}E_1^{-m-1,k+m}\xrightarrow{d_1}~ _{W}E_1^{-m,k+m}\xrightarrow{d_1}~ _{W}E_1^{-m+1,k+m},$$
where $m=p+q$. By strictness of the differentials with respect to $F$, one can invert the operations of taking cohomology and applying $Gr^F$. The terms of the spectral sequence are described in \cite[Proposition 1.7 (b)]{T87} and $h^{p,q,k}$ is the dimension of the middle cohomology of
\begin{align*}
& Gr^F_p H^{k-m-2}(D^{(m+1)}, \Omega_{D^{(m+1)}}^\bullet \otimes \cM_{m+1}) \\
\to\quad & Gr^F_pH^{k-m}(D^{(m)}, \Omega_{D^{(m)}}^\bullet \otimes \cM_m) \\
\to\quad& Gr^F_p H^{k-m+2}(D^{(m-1)}, \Omega_{D^{(m-1)}}^\bullet \otimes \cM_{m-1})
\end{align*}
as $L$ varies: here $D^{(n)}$ denotes the normalization of the $n$-fold intersections of the $D_i$'s and $\cM_{n}$ is the canonical extension of $j_*L|_{D^{(n)}\setminus D^{(n+1)}}$. The graded pieces are in turn identified with the cohomology over $D^{(n)}$, $n=m-1,m,m+1$, of certain {\em coherent} sheaves depending on $L$: their dimension therefore varies in an upper-semicontinuous way but we are unable to control the dimension of the cohomology of the maps between them.
\end{rem}

\begin{bibdiv}
\begin{biblist}
\bib{ABCKT}{book}{        
      author={Amor\'os, J.},
      author={Burger, M.},
      author={Corlette, K.},
      author={Kotschik, D.},
      author={Toledo, D.},
       title={Fundamental groups of compact K\"ahler manifolds},
        date={1996},
     publisher={AMS},
     series={Mathematical Surveys and Monographs},
      volume={44},
}

\bib{ADH}{article}{
      author={Arapura, D.},
      author={Dimca, A.},
      author={Hain, R.},
       title={On the fundamental groups of normal varieties},
        date={2016},
     journal={Communications in Contemporary Mathematics},
      volume={18},
       pages={1550065},
}

\bib{B88}{article}{
      author={Beauville, A.},
       title={Annulation du $H^1$ et syst{\`e}mes paracanoniques sur les
  surfaces},
        date={1988},
     journal={J. reine angew. Math.},
      volume={388},
       pages={149\ndash 157},
}

\bib{BBD}{article}{
      author={Beilinson, A.},
      author={Bernstein, J.},
      author={Deligne, P.},
       title={Faisceaux pervers},
        date={1982},
     journal={Ast{\'e}risque},
      volume={100},
       pages={5\ndash 171},
}

\bib{B09}{article}{
      author={Budur, N.},
       title={Unitary local systems, multiplier ideals, and polynomial
  periodicity of Hodge numbers},
        date={2009},
     journal={Adv. in Math.},
      volume={221},
       pages={217\ndash 250},
}

\bib{BLW21}{article}{
      author={Budur, N.},
      author={Lerer, L. A.},
      author={Wang, H.},
       title={Absolute sets of rigid local systems},
        date={2021},
     journal={preprint},
      eprint={arXiv:2104.00168},
}

\bib{BR19}{article}{
      author={Budur, N.},
      author={Rubi\'o, M.},
       title={L-infinity pairs and applications to singularities},
        date={2019},
     journal={Adv. in Math.},
      volume={354},
       pages={106754},
}

\bib{BW15}{article}{
      author={Budur, N.},
      author={Wang, B.},
       title={Cohomology jump loci of differential graded Lie algebras},
        date={2015},
     journal={Compos. Math.},
      volume={151},
       pages={1499\ndash 1528},
}

\bib{BW17b}{article}{
   author={Budur, N.},
   author={Wang, B.},
   title={Recent results on cohomology jump loci},
   conference={
      title={in: Hodge theory and $L^2$-analysis},
   },
   book={
      series={Adv. Lect. Math. (ALM)},
      volume={39},
      publisher={Int. Press, Somerville, MA},
   },
   date={2017},
   pages={207--243},
}

\bib{BW20}{article}{
      author={Budur, N.},
      author={Wang, B.},
       title={Absolute sets and the decomposition theorem},
        date={2020},
     journal={Ann. Sci. \'Ecole Norm. Sup.},
      volume={53},
       pages={469\ndash 536},
}

\bib{BW17}{article}{
      author={Budur, N.},
      author={Wang, B.},
       title={Cohomology jump loci of quasi-compact K\"ahler manifolds},
        date={2020},
     journal={Pure Appl. Math. Q.},
      volume={16},
       pages={981\ndash 999},
}

\bib{C91}{article}{
      author={Catanese, F.},
       title={Moduli and classification of irregular K\"ahler manifolds (and
  algebraic varieties) with Albanese general type fibrations},
        date={1991},
     journal={Invent. Math.},
      volume={104},
       pages={263\ndash 289},
}

\bib{D73}{article}{
      author={Deligne, P.},
       title={Th\'eorie de Hodge III},
        date={1973},
     journal={Publ. Math. IHES},
      volume={44},
       pages={5\ndash 77},
}

\bib{DGMS}{article}{
      author={Deligne, P.},
      author={Griffiths, P.},
      author={Morgan, J.},
      author={Sullivan, D.},
       title={Real homotopy theory of K\"ahler manifolds},
        date={1975},
     journal={Invent. Math.},
      volume={29},
       pages={245\ndash 274},
}

\bib{Dimca}{book}{
      author={Dimca, A.},
       title={Sheaves in Topology},
   publisher={Springer},
     series={Universitext},
        date={2004},
}

\bib{EK}{article}{
      author={Esnault, H.},
      author={Kerz, M.},
       title={Arithmetic subspaces of moduli spaces of rank one local systems},
        date={2020},
     journal={Cambridge Journal of Mathematics},
      volume={8},
       pages={453\ndash 478},
}

\bib{ES}{article}{
      author={Eyssidieux, P.},
      author={Simpson, C.},
       title={Variations of mixed Hodge structure attached to the deformation
  theory of a complex variation of Hodge structures},
        date={2011},
     journal={J. Eur. Math. Soc.},
      volume={13},
       pages={1769\ndash 1798},
}

\bib{FL80}{book}{
      author={Fulton, W.},
      author={Lazarsfeld, R.},
       title={Connectivity and its applications in algebraic geometry},
     edition={Algebraic Geomtry (Chicago 1980)},
     series={Lect. Notes in Math.}, 
     publisher={Springer},
        date={1981},
      volume={862},
}

\bib{GM}{article}{
      author={Goldman, W.~M.},
      author={Millson, J.},
       title={The deformation theory of representations of fundamental groups
  of compact K\"ahler manifolds},
        date={1988},
     journal={Publ. Math. IHES},
      volume={67},
       pages={43\ndash 96},
}

\bib{GL87}{article}{
      author={Green, M.},
      author={Lazarsfeld, R.},
       title={Deformation theory, generic vanishing theorems, and some
  conjectures of Enriques, Catanese and Beauville},
        date={1987},
     journal={Invent. Math.},
      volume={90},
       pages={389\ndash 407},
}
\bib{H87}{article}{
      author={Hain, R.},
       title={The de Rham homotopy theory of complex algebraic varieties I},
        date={1987},
     journal={K-Theory},
      volume={1},
       pages={271\ndash 324},
}

\bib{H83}{book}{
      author={Halperin, S.},
       title={Lectures on minimal models},
        date={1983},
     series={M\'emoires de la SMF},
      volume={9},
}

\bib{H64}{article}{
      author={Hironaka, H.},
       title={Resolution of singularities of an algebraic variety over a field
  of characteristic zero I},
        date={1964},
     journal={Ann. of Math.},
      volume={79},
       pages={109\ndash 203},
}

\bib{HTT}{book}{
      author={Hotta, R.},
      author={Takeuchi, K.},
      author={Tanisaki, T.},
      title={$\mathcal{D}$-modules, perverse sheaves, and representation theory},
   publisher={Birkh\"auser},
   series={Progress in Mathematics},
   volume={236},
        date={2008},
}

\bib{Kad}{article}{
      author={Kadeishvili, T.~V.},
       title={On the homology theory of fiber spaces},
        date={1980},
     journal={Uspekhi Mat.},
      volume={35},
       pages={183\ndash 188},
      eprint={arXiv:0504437},
}

\bib{L04}{article}{
      author={Lada, T.},
       title={$L_\infty$-algebra representations},
        date={2004},
     journal={Appl. Categ. Structures},
      volume={12},
       pages={29\ndash 34},
}

\bib{L1}{article}{
      author={Lef\`evre, L.-C.},
       title={Mixed Hodge structures and representations of fundamental groups
  of algebraic varieties},
        date={2019},
     journal={Adv. in Math.},
      volume={349},
       pages={869\ndash 910},
}

\bib{L3}{article}{
      author={Lef\`evre, L.-C.},
       title={Mixed Hodge structures on cohomology jump ideals},
     journal={preprint},
     date={2021},
      eprint={arXiv:2106.12339},
}

\bib{MFK}{book}{
      author={Mumford, D.},
      author={Fogarty, J.},
      author={Kirwan, F.},
       title={Geometric invariant theory},
     edition={3},
   publisher={Springer},
     series={Ergebnisse der Mathematik und ihrer Grenzgebiete},
        date={1994},
      volume={34},
}

\bib{N93}{article}{
      author={Nitsure, N.},
       title={Moduli of semistable logarithmic connections},
        date={1993},
     journal={J. Amer. Math. Soc.},
      volume={6},
       pages={597\ndash 609},
}

\bib{Saito}{article}{
      author={Saito, M.},
       title={Weight zero part of the first cohomology of complex algebraic
  varieties},
        journal={preprint},
        eprint={arXiv:1804.03632},
        date={2018},
}

\bib{S92}{article}{
      author={Simpson, C.},
       title={Higgs bundles and local systems},
        date={1992},
     journal={Publ. Math. IHES},
      volume={75},
       pages={5\ndash 95},
}

\bib{S93}{article}{
      author={Simpson, C.},
       title={Subspaces of moduli spaces of rank one local systems},
        date={1993},
     journal={Ann. Sci. \'Ecole Norm. Sup.},
      volume={26},
       pages={361\ndash 401},
}

\bib{S94a}{article}{
      author={Simpson, C.},
       title={Moduli of representations of the fundamental group of a smooth
  projective variety I},
        date={1994},
     journal={Publ. Math. IHES},
      volume={79},
       pages={47\ndash 129},
}

\bib{S94b}{article}{
      author={Simpson, C.},
       title={Moduli of representations of the fundamental group of a smooth
  projective variety II},
        date={1994},
     journal={Publ. Math. IHES},
      volume={80},
       pages={5\ndash 79},
}

\bib{T87}{article}{
      author={Timmerscheidt, K.},
       title={Mixed Hodge theory for unitary local systems},
        date={1987},
     journal={J. reine angew. Math.},
      volume={379},
       pages={152\ndash 171},
}

\bib{Z79}{article}{
      author={Zucker, S.},
       title={Hodge theory with degenerating coefficients: $L^2$-cohomology in
  the Poincar\'e metric},
        date={1979},
     journal={Ann. of Math.},
      volume={109},
       pages={415\ndash 476},
}

\end{biblist}
\end{bibdiv}
\end{document}